\DeclareMathOperator{\SubF}{SubF}
\DeclareMathOperator{\Fac}{Fac}
\DeclareMathOperator{\Dif}{Dif}
\definecolor{codegreen}{rgb}{0,0.6,0}
\definecolor{codegray}{rgb}{0.5,0.5,0.5}
\definecolor{codepurple}{rgb}{0.58,0,0.82}
\definecolor{backcolour}{rgb}{0.95,0.95,0.92}
\newtheorem{theorem}{Theorem}[section]
\newtheorem{cor}[theorem]{Corollary}
\theoremstyle{definition}
\newtheorem{defn}[theorem]{Definition}
\newtheorem{ex}[theorem]{Example}
\theoremstyle{remark}
\newtheorem{rem}[theorem]{Remark}
\numberwithin{equation}{section}
\begin{document}
\title[Computational aspects of subindices and sub-factors of finite groups
]{Computational aspects of subindices and subfactors with characterization of finite index stable groups}
\author{M.H. Hooshmand and M.M. Yousefian Arani}
\address{
Department of Mathematics, Shiraz Branch, Islamic Azad University, Shiraz, Iran }
\email{\tt hadi.hooshmand@gmail.com}
\address{
Department of Pure Mathematics, Faculty of Mathematical Sciences, University of Kashan, Kashan, Iran}

\email{\tt momoeysfn@gmail.com}

\subjclass[2000]{20F99, 20E99, 20-04, 20-08}

\keywords{Factor subset; sub-factor; sub-index; index stable subset; index stable group; factorization of finite groups
 \indent }
\date{}

\begin{abstract}
Recently, sub-indices and sub-factors of groups with connections to number theory, additive combinatorics, and factorization of groups
have been introduced and studied.
Since all group subsets are considered in the theory and there are many basic open problems, conjectures, and questions, their computational aspects are of particular importance.
In this paper, by introducing some computational methods and using theoretical approaches together, we not only solve several
problems but also pave the way to study the topic. As the most important result of the study, we completely characterize finite index stable groups.
\end{abstract}
\maketitle
\section{Introduction}
\noindent
While studying periodic type sets and factors of basic algebraic structures (i.e., magmas,
semigroups, groups, etc.), the first author was guided to new concepts under the title
of sub-factors of groups, sub-indices, and index stability of group subsets.
Also, he arrived at a challenging problem in 2014 (see: mathoverflow.net/questions/155986/factor-subset-of-finite-group)
about factorization of (arbitrary) finite groups
(also see: Kourovka Notebook \cite{Khu}, Vol. 20:  Question 20.37\& 19.35, and \cite{MH1}). The conjecture
says: for every factorization $|G|=ab$ of a finite group $G$,
there exist subsets $A,B$ such that $|A| =a$, $|B|=b$, and $G=AB$.
It has been partially answered in \cite{Berg, Bil, MH1}.
After that, sub-factors of groups are introduced and it is shown
that the concept of index of subgroups can be extended to factors and even arbitrary subsets! (\cite{MH2}).
In \cite{MH3} more studies for sub-indices and sub-factors of finite groups have been done.
Characterization of index stable groups is a challenging problem in the theory with many questions,
conjectures, and research projects.
Answers to all of which seem
 unlikely without the use of computational methods since the theory considers all subsets of a group.
Here, the pervious results together some new computational methods enable us to prove a main theorem that
completely characterizes finite index stable groups. Also,
we solve many problems, answer several questions, and prove some related conjectures by using the
theoretical backgrounds form \cite{MH2,MH3} together with the computational methods.
\section{Sub-factors and sub-indices of group subsets with computational aspects}
Let $A,B$ be subsets of a group $G$. We call the product $AB$ direct, and denote it by $A \cdot B$
if the representation of every
  element of $AB$ by $x=ab$  with $a\in A$, $b\in B$ is unique.
Hence, $G=A \cdot B$ if and only if $G=A B$ and the product $A B$ is direct
(a factorization of $G$ by two subsets, the additive notation is $G=A\dot{+} B$).
Putting
$$\Dif_\ell(A):=A^{-1}A \; , \; \Dif_r(A):=AA^{-1}$$
where  $A^{-1}:=\{ a^{-1}:a\in A\}$, we have $AB=A\cdot B$ if and only if $\Dif_\ell(A)\cap \Dif_r(B)=\{ 1\}$.\\
If $G=A \cdot B$ , then $A$ (resp. $B$) a left (resp. right) factor of $G$ related to $B$ (resp. $A$).
We call $A$ a left
factor of $G$ if and only if $G=A \cdot B$ for some $B\subseteq G$
(equivalently, there exists a right factor $B$ of $G$ relative to $A$).
For example, every subgroup is a left (resp. right) factor
relative to its right (resp. left) transversal, hence it is a two sided factor of $G$.
It is clear that there is a right factor of $G$ relative to $A\subseteq G$
if and only if $A$ is a left factor of $G$. In reference \cite{MH2}, the first author achieved a generalization of factors
that not only does not have the deficiency of factors but also leads to the important concept of sub-indices for all subsets of groups.
\subsection{Sub-factors of groups.}
Let $A$ be a fixed subset of $G$. We call $B$  a right sub-factor of $G$
related to $A$  if $B$ is an inclusion-maximal subset of $G$ with respect to the property $AB=A\cdot B$. Also,
we say $B$ is a right sub-factor of $G$ if it is a right sub-factor related to some subsets
of $G$ (left sub-factors are defined analogously). For example $B=\{2,4\}$ is a sub-factor of
$\mathbb{Z}_5$ (related to $A=\{0,1\}$) but not a factor. Also, all cosets of $B$ are sub-factors of $\mathbb{Z}_5$
(although, $\mathbb{Z}_5$ does not have non-trivial factors). \\
It is proved that every subset of a group has related right and left sub-factors, but it does not need that it
is a sub-factor (e.g., $\mathbb{Z}_+$ is not a sub-factor of the additive group of integers, since $\Dif(\mathbb{Z}_+)=\mathbb{Z}$,
although it is a sub-semigroup).
Also, $B$ is a right sub-factor (of $G$) related to $A$ if and only if
\begin{equation}
\Dif_\ell(A)\cap \Dif_r(B)=\{ 1\}\; , \; \Dif_\ell(A)B=G
\end{equation}
Therefore, we conclude some important results for groups including:\\
(1) For every $A\subseteq G$, the equation $\Dif_\ell(A)X=G$ has some solutions with the condition
$\Dif_\ell(A)\cap \Dif_r(X)=\{ 1\}$.\\
(2) For every $A\subseteq G$, the equation $\Dif_\ell(A)X=G$ has minimal solutions (the largest solution is $G$).\\
(3) The property $AX=A\cdot X$ has maximal solutions (the least solution is $\emptyset$).\\
Putting
$$
\Fac_r(A)=\Fac_r(G:A):=\{ B\subseteq G : B \; \mbox{is a right factor of $G$
related to $A$}\},
$$
$$
\SubF_r(A)=\SubF_r(G:A):=\{ B\subseteq G : B \; \mbox{is a right sub-factor of $G$
related to $A$}\},
$$
we have $\Fac_r(A)\subseteq \SubF_r(A)\neq \emptyset$. But $\Fac_r(A)\neq \emptyset$
if and only if $A$ is a left factor of $G$.\\
\textbf{Computational aspects of sub-factors.}
Since there are many right sub-factors $B$ related to a subset $A$, in practice, we need to know
limitations on such $B$, and applicable methods and algorithms for computing sub-factors (of finite groups).
The following are some useful facts for the computational aspect.\\
\textbf{(a)} It is enough to consider subsets $B$ containing the identity 1. Because putting
$$
\SubF^1_r(A)=\SubF^1_r(G:A):=\{ B_1\in \SubF_r(A): 1\in B_1 \},
$$
(this agrees with the notation $X^1:=X\cup\{1\}$ for every $X\subseteq G$).
We have $B\in \SubF_r(A)$ if and only if $B=B_1\beta$ for some $B_1\in \SubF^1_r(A)$ and $\beta\in B$
(note that $B\neq\emptyset$, and consider $B_1:=Bb_0^{-1}$ for a $b_0\in B$).
Hence
$$\{B_1g: B_1\in\SubF^ 1_r(A) , g\in G\}=\SubF_r(A).$$
\textbf{(b)} Putting $\mathcal{C}_\ell(A):=G\setminus \Dif_\ell(A)$
we conclude that $B_1\subseteq \mathcal{C}_\ell(A)\cup\{1\}=\mathcal{C}^1_\ell(A)$ (since $\Dif_\ell(A)\cap \Dif_r(B_1)=\{ 1\}$
and $B_1\subseteq \Dif_r(B_1)$ if $AB_1=A\cdot B_1$ and $1\in B_1$), for every $B_1\in \SubF^ 1_r(A)$.
Hence, for finite groups $G$ we have
\begin{equation} \label{subf1}
\SubF^1_r(A)\subseteq \Biggl\{\mathbf{B}\subseteq \mathcal{C}^1_\ell(A): 1\in \mathbf{B},
\Bigl\lceil\frac{|G|}{|\Dif_\ell(A)|}\Bigr\rceil\leq |\mathbf{B}|
\leq \Bigl\lfloor \frac{|G|}{|A|}\Bigr\rfloor \Biggr\}
\end{equation}
Therefore it is enough to check only elements of the right hand of (\ref{subf1}) for finding right sub-factors
of $G$ related to $A$, that in this case, the calculations will be much less.
Hence, we can write a GAP code for computing $\SubF_r(A)$ as follows (\href{https://github.com/momoeysfn/Subindices/blob/main/lib/sf.g}{link to code}).
\begin{ex} Consider the additive group $G:=\mathbb{Z}_6$ and $A=\{ 0,1 \}\subseteq G$. By using the above code, we obtain
	 $\mbox{SubF}(A)= \{ \{ 0, 2, 4 \}, \{ 0, 3 \}, \{ 1, 3, 5 \}, \{ 1, 4 \}, \{ 2, 5 \} \} $. Also, see
\href{https://github.com/momoeysfn/Subindices/blob/main/examples/Ex21.md}{(link to more examples)}.
\end{ex}
\textbf{(c)} Another way for computing $\SubF_r(A)$ is applying an algorithm in \cite{MH3}.
Indeed, the relation
\begin{equation}
B\in \SubF_r(A) \; \Leftrightarrow \;
\forall b\in B\; ; \; b\in \bigcap_{\beta\in B\setminus\{ b\}}\mathcal{C}_\ell(A)\beta
\; \& \;  \bigcap_{\beta\in B}\mathcal{C}_\ell(A)\beta=\emptyset.
\end{equation}
had led us to the following theorem.
\begin{theorem}[\cite{MH3}]
\label{rbsfalg}
Let $G$ be a finite group and $A\subseteq G$. Fix $g_0\in G$ and put  $\mathcal{C}^{(-1)}_\ell(A):=G$.
Then, construct the finite sequences $\{g_n\}_{n\geq 0}$ and $\{\mathcal{C}_\ell^{(n)}(A)\}_{n\geq-1}$ as follows:\\
By the assumption $g_0,\cdots , g_{n}$ and $\mathcal{C}^{(-1)}_\ell(A),\cdots , \mathcal{C}_\ell^{(n-1)}(A)$ are defined, for every integer
 $n\geq 0$, set
\begin{equation}
\mathcal{C}_\ell^{(n)}(A):=\mathcal{C}_\ell^{(n-1)}(A)\cap \mathcal{C}_\ell(A)g_{n},
\end{equation}
 and then choose an  element $g_{n+1}$ in
$\mathcal{C}_\ell^{(n)}(A)$ if it is nonempty, and also put $B_n:=\{g_0,\cdots , g_{n+1}\}$
$($thus $B_{-1}=\{g_0\}$, $B_{n-1}\cup \{g_{n+1}\}=B_n$ for all $n\geq 0)$.\\
Then there exists a least integer $N\geq 0$ such that $\mathcal{C}_\ell^{(N)}(A)=\emptyset$, and
$B:=B_{N-1}$ $($with $N+1$ elements$)$ is a right sub-factor of $G$ related to $A$.
\end{theorem}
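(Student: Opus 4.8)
The plan is to verify directly, using the characterization of right sub-factors recalled just before the theorem, that the set $B = B_{N-1}$ produced by the algorithm satisfies both clauses of the equivalence
\[
B\in \SubF_r(A) \; \Leftrightarrow \;
\forall b\in B\; ; \; b\in \bigcap_{\beta\in B\setminus\{ b\}}\mathcal{C}_\ell(A)\beta
\; \& \;  \bigcap_{\beta\in B}\mathcal{C}_\ell(A)\beta=\emptyset.
\]
First I would establish that the process terminates: since $G$ is finite and the sequence $\{\mathcal{C}_\ell^{(n)}(A)\}$ is nonincreasing under inclusion by its recursive definition, and since at each step where $\mathcal{C}_\ell^{(n)}(A)\neq\emptyset$ we pick $g_{n+1}\in\mathcal{C}_\ell^{(n)}(A)$, I would argue that $g_{n+1}\in\mathcal{C}_\ell^{(n)}(A)\subseteq\mathcal{C}_\ell(A)g_n$ forces the $g_i$ to be pairwise distinct (so the chain of strictly shrinking sets, or at least the growing set $B_n$, cannot go on past $|G|$ steps). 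Hence a least $N\geq 0$ with $\mathcal{C}_\ell^{(N)}(A)=\emptyset$ exists. The key bookkeeping identity to prove by induction on $n$ is
\[
\mathcal{C}_\ell^{(n)}(A)=\bigcap_{i=0}^{n}\mathcal{C}_\ell(A)g_i \qu/\text{, equivalently } \quad \mathcal{C}_\ell^{(n)}(A)=\bigcap_{\beta\in B_{n-1}}\mathcal{C}_\ell(A)\beta,
\]
which is immediate from $\mathcal{C}_\ell^{(-1)}(A)=G$ and the recursion $\mathcal{C}_\ell^{(n)}(A)=\mathcal{C}_\ell^{(n-1)}(A)\cap\mathcal{C}_\ell(A)g_n$.

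Granting that identity, the second clause of the equivalence is then exactly the statement $\mathcal{C}_\ell^{(N)}(A)=\bigcap_{\beta\in B_{N-1}}\mathcal{C}_\ell(A)\beta=\emptyset$, which holds by the choice of $N$. For the first clause, fix $b=g_j\in B=\{g_0,\dots,g_N\}$ with $j\leq N$; I must show $g_j\in\bigcap_{i\neq j}\mathcal{C}_\ell(A)g_i$, i.e. $g_j\in\mathcal{C}_\ell(A)g_i$ for every $i\neq j$. For $i<j$ this follows because $g_j\in\mathcal{C}_\ell^{(j-1)}(A)=\bigcap_{k=0}^{j-1}\mathcal{C}_\ell(A)g_k\subseteq\mathcal{C}_\ell(A)g_i$. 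The case $i>j$ is the one requiring a short argument: here I would use the symmetry of the defining condition — since $g_i\in\mathcal{C}_\ell^{(i-1)}(A)\subseteq\mathcal{C}_\ell(A)g_j$, we get $g_i=c g_j$ for some $c\in\mathcal{C}_\ell(A)=G\setminus\Dif_\ell(A)$, and I would show $\mathcal{C}_\ell(A)$ is closed under inverses (equivalently $\Dif_\ell(A)=A^{-1}A$ is, which is clear since $(A^{-1}A)^{-1}=A^{-1}A$), so $g_j=c^{-1}g_i\in\mathcal{C}_\ell(A)g_i$ as well. That gives the first clause, and the equivalence then yields $B\in\SubF_r(A)$. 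Finally $|B|=|B_{N-1}|=N+1$ by construction, since all the $g_i$ are distinct.

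The main obstacle I anticipate is the $i>j$ case just described: the recursion only records the constraint of $g_{n+1}$ against the \emph{earlier} chosen elements, so membership of $g_j$ in $\mathcal{C}_\ell(A)g_i$ for a \emph{later} index $i$ is not built in and has to be recovered from the inverse-closure of $\mathcal{C}_\ell(A)$ — i.e. from the observation that $\Dif_\ell(A)$ is a symmetric subset containing $1$. A secondary point to be careful about is the termination/distinctness argument: one should note that $g_{n+1}\in\mathcal{C}_\ell(A)g_n$ with $1\notin\mathcal{C}_\ell(A)$ already gives $g_{n+1}\neq g_n$, and combined with $g_{n+1}\in\mathcal{C}_\ell(A)g_i$ for all $i\leq n$ it gives $g_{n+1}\notin\{g_0,\dots,g_n\}$, so $|B_n|=n+2$ strictly increases and the process must stop with $\mathcal{C}_\ell^{(N)}(A)=\emptyset$ for some $N\leq|G|$. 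Everything else is the routine induction recorded above.
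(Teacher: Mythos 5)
Your argument is correct, and it is essentially the route the paper indicates: you verify the two clauses of the displayed characterization $B\in \SubF_r(A) \Leftrightarrow \bigl(\forall b\in B,\ b\in \bigcap_{\beta\in B\setminus\{b\}}\mathcal{C}_\ell(A)\beta\bigr)\ \&\ \bigcap_{\beta\in B}\mathcal{C}_\ell(A)\beta=\emptyset$ through the identity $\mathcal{C}_\ell^{(n)}(A)=\bigcap_{i=0}^{n}\mathcal{C}_\ell(A)g_i$, using the symmetry of $\Dif_\ell(A)=A^{-1}A$ to handle the pairs with later index — the same machinery this paper itself invokes when proving Theorem 2.4 (the statement here is only quoted from \cite{MH3}, so there is no in-paper proof to compare line by line). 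The one point worth flagging is that your termination and distinctness argument via $1\notin\mathcal{C}_\ell(A)$ tacitly assumes $A\neq\emptyset$ (so that $1\in\Dif_\ell(A)$), a degenerate case the theorem statement itself also leaves implicit.
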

In \cite{MH3} we introduce a conjecture mentions that every right sub-factor of $G$ related to $A$
can be gotten from the above algorithm. Now, we prove it.
\begin{theorem}
\label{rbsfproof}
Every right sub-factor of $G$ related to $A$ is obtained from the above algorithm $($i.e., the set of all outputs $B$ of
the algorithm is equal to $\SubF_r(A).)$
\end{theorem}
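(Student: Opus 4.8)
The plan is to show that an arbitrary element $B \in \SubF_r(A)$ can be produced as an output of the algorithm by making the ``right'' choices of the free parameters $g_0, g_1, \dots$. Write $B = \{g_0, g_1, \dots, g_N\}$ for some enumeration of its $N+1$ elements (using fact (a), we may as well think of the process starting from any chosen $g_0 \in B$, since the algorithm's first element $g_0$ is arbitrary). I would then run the algorithm with these specific choices: at each stage $n \ge 0$, having already committed to $g_0, \dots, g_n \in B$, I need to verify two things — first, that $g_{n+1} \in \mathcal{C}_\ell^{(n)}(A)$, so the choice is legal; and second, that the process does not terminate prematurely, i.e. $\mathcal{C}_\ell^{(n)}(A) \neq \emptyset$ for $n \le N-1$, and that $\mathcal{C}_\ell^{(N)}(A) = \emptyset$ exactly when all of $B$ has been used up.

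The key tool is the characterization of right sub-factors displayed just before Theorem~\ref{rbsfalg}: $B \in \SubF_r(A)$ iff for every $b \in B$ we have $b \in \bigcap_{\beta \in B \setminus \{b\}} \mathcal{C}_\ell(A)\beta$, and $\bigcap_{\beta \in B} \mathcal{C}_\ell(A)\beta = \emptyset$. First I would unwind the recursion to get a closed form: an easy induction shows $\mathcal{C}_\ell^{(n)}(A) = \bigcap_{k=0}^{n} \mathcal{C}_\ell(A) g_k$ (the $\mathcal{C}_\ell^{(-1)}(A) = G$ initialization just makes the $n=0$ case work). So the legality condition $g_{n+1} \in \mathcal{C}_\ell^{(n)}(A)$ reads $g_{n+1} \in \bigcap_{k=0}^{n}\mathcal{C}_\ell(A)g_k$. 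Since $g_{n+1}$ and $g_0, \dots, g_n$ are all in $B$, and $\{g_0,\dots,g_n\} \subseteq B \setminus \{g_{n+1}\}$, the first clause of the characterization gives exactly $g_{n+1} \in \bigcap_{\beta \in B \setminus \{g_{n+1}\}} \mathcal{C}_\ell(A)\beta \subseteq \bigcap_{k=0}^n \mathcal{C}_\ell(A) g_k$. Hence every intended choice is legal, so the algorithm can indeed be driven to enumerate all of $B$.

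Next I would check termination happens at precisely the right moment. After $g_0, \dots, g_N$ (all of $B$) have been chosen, we reach $\mathcal{C}_\ell^{(N-1)}(A)$; the algorithm asks for $g_{N} \in \mathcal{C}_\ell^{(N-1)}(A) = \bigcap_{k=0}^{N-1}\mathcal{C}_\ell(A)g_k$, which is nonempty because $g_N$ lies in it by the argument above — so the run legitimately lasts this long and $B_{N-1} = \{g_0, \dots, g_N\}= B$. Then we must confirm that the NEXT set is empty: $\mathcal{C}_\ell^{(N)}(A) = \bigcap_{k=0}^{N}\mathcal{C}_\ell(A)g_k = \bigcap_{\beta \in B}\mathcal{C}_\ell(A)\beta = \emptyset$, which is exactly the second clause of the characterization. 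So $N$ is the least index with $\mathcal{C}_\ell^{(N)}(A) = \emptyset$ (leastness because all earlier sets contain the corresponding $g_{n+1}$), and the algorithm outputs $B_{N-1} = B$. Combined with the forward direction already established in Theorem~\ref{rbsfalg} (every output is a right sub-factor), this proves the set of outputs equals $\SubF_r(A)$.

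The main subtlety — though it is not really an obstacle — is bookkeeping around the index shift in the algorithm's definition (the output is $B_{N-1}$, which has $N+1$ elements, and $\mathcal{C}_\ell^{(n)}(A)$ is defined for $n \ge -1$), so I would be careful to match "the algorithm has chosen $g_0,\dots,g_{n+1}$ and is about to test $\mathcal{C}_\ell^{(n)}(A)$" against the combinatorial statement. One should also note that the enumeration order of $B$ is immaterial: the closed form $\mathcal{C}_\ell^{(n)}(A) = \bigcap_{k \le n}\mathcal{C}_\ell(A)g_k$ is symmetric in $g_0, \dots, g_n$, and the characterization is stated symmetrically over $B$, so any ordering of $B$ (starting from any of its elements) yields a valid run of the algorithm with output $B$. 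No genuinely hard step arises; the proof is essentially the observation that the algorithm's greedy recursion, read in closed form, is literally the membership criterion for $\SubF_r(A)$.
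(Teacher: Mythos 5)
Your proposal is correct and takes essentially the same route as the paper: enumerate the given sub-factor, drive the algorithm's free choices through its elements, and use the characterization of $\SubF_r(A)$ (via the closed form $\mathcal{C}_\ell^{(n)}(A)=\bigcap_{k\leq n}\mathcal{C}_\ell(A)g_k$) to certify each choice is legal. The only minor difference is the endgame: you verify directly that $\mathcal{C}_\ell^{(N)}(A)=\bigcap_{\beta\in B}\mathcal{C}_\ell(A)\beta=\emptyset$, so the run halts exactly when $B$ is exhausted, whereas the paper stops at the containment $X\subseteq B_{N-1}$ and concludes equality from the maximality of $X$ together with Theorem~\ref{rbsfalg}; both finishes are valid.
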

\begin{proof}
Let $X\in\SubF_r(A)$ and represent its members by $X=\{x_0,\cdots , x_{m+1}\}$ where $m=|X|-2$
(thus $m\geq -1$). Now in the algorithm choose $g_0:=x_0$ (since $g_0$ is arbitrary in it).
If $\mathcal{C}_\ell(A)=\emptyset$, then $\Dif_\ell(A)=G$ and hence $|X|=1$, $N=0$ (in the algorithm) and
$X=\{x_0\}=B=B_{-1}$ thus we are done. Otherwise, suppose that $g_0,\cdots,g_n$ take
the values $x_0,\cdots,x_n$, respectively,  for
some $n<m+1$. Since the product $A (\{x_0,\cdots , x_{n}\}\cup \{x_{n+1}\})$ is direct,
$(2.4)$ requires that
$$
x_{n+1}\in \bigcap_{i=0}^n\mathcal{C}_\ell(A)x_i=\bigcap_{i=0}^n\mathcal{C}_\ell(A)g_i=\mathcal{C}_\ell^{(n)}(A)
$$
So $g_{n+1}$ can take the value $x_{n+1}$ in the algorithm process.
Therefore $X\subseteq B_{N-1}$ and so $X=B_{N-1}$ (and $N=m$) since  $X,B_{N-1}\in\SubF_r(A)$.
\end{proof}
By using Theorem \ref{rbsfalg}, \ref{rbsfproof}, we are now enable to write another GAP code for computing the whole $\SubF_r(A)$ as \href{https://github.com/momoeysfn/Subindices/blob/main/lib/bsf.g}{(link to code)}. This also gives us a constructive method to compute an arbitrary sub-factor of $G$ related to $A$ which is much more efficient in larger groups, see \href{https://github.com/momoeysfn/Subindices/blob/main/lib/rbsfrandom.g}{(link to code)}
\begin{ex} Considering
	 $ G:=D_8=\{1,a,a^2,a^3,b, ab, a^2b, a^3b \}, \ A:=\{a,a^2,b\}$, we obtain $$\SubF_r(A)= \Bigl\{ \{ 1, b \}, \{ 1, a^2 \}, \{ 1, ba^3 \}, \{ 1, b \}, \{ b, ba^2 \}, \{ a, b \}, \
	$$ $$\{ a, ba \}, \{ ba, ba^3 \}, \{ a, a^3 \}, \{ a^2, ba^2 \}, \\ \{ a^3, ba^2 \}, \{ a^3, ba^3 \} \Bigr\},$$
by using the second code. Also, see
\href{https://github.com/momoeysfn/Subindices/blob/main/examples/Ex24.md}{(link to more examples)}.
\end{ex}
\subsection{Sub-indices of group subsets.} \label{indsubsets}
For each subset $A$ of a group $G$ we assign sub-indices of $A$ as follows:
$$
|G:A|^+:=\sup\{|B|: B\in \SubF_r(A)\}\; : \; \mbox{right upper index of $A$ (in $G$)};
$$
$$
|G:A|^-:=\inf\{|B|: B\in \SubF_r(A)\}\; : \; \mbox{right lower index of $A$ (in $G$)};
$$
The left notations $|G:A|_\pm$ are defined analogously. Now, we call $A$:\\
(a) right (resp. left) index stable in $G$ if $|G:A|^+=|G:A|^-$ (resp. $|G:A|_+=|G:A|_-$),
and we use the notation $|G:A|_r$ (resp. $|G:A|_\ell$)
for the common value and call it right (resp. left) index of $A$ in $G$. \\
(b) index stable (in $G$) if all of its four sub-indices are equal (equivalently $|G:A|_r=|G:A|_\ell$),
and the common value is denoted by $|G:A|$ and is called the index of $A$ in $G$ (a unique cardinal number corresponding to $A$).\\
Also, a group is called index stable (resp. right index stable) if all its subsets are index stable (resp. right index stable).\\
It is worth noting that if $G$ is a group and $H$ a subgroup, then $H$ (as a subset) is always index stable in $G$, but
as an  independently group, $H$ may be not index stable (i.e., it contains a subset that is not index stable in $H$).\\
The following are some examples of index stable groups and subsets:\\
- Every group of order $<8$ except $C_6$  is index stable.\\
- The only index stable cyclic groups are $C_1$, ..., $C_5$ and $C_7$.\\
- If $A$ is a left (resp. right) difference-generating subset (i.e., $\Dif_\ell(A)=G$), then $A$ and all its upper
subsets are right (resp. left) index stable with the right (resp. left) index $1$.\\
There are some basic properties of sub-indices in arbitrary and finite groups (see \cite{MH2,MH3}).
 The followings are some important results for the finite case:\\
(a)
\begin{equation}
|G:A|^+\leq |G|-|\Dif_\ell(A)|+1,
\end{equation}
and if $A\neq \emptyset$, then
\begin{equation}
\frac{|G|}{|A|^2-|A|+1}\leq
 \lceil\frac{|G|}{|\Dif_\ell(A)|}\rceil\leq
|G:A|^-\leq |G:A|^+ \leq \lfloor \frac{|G|}{|A|}\rfloor \leq \frac{|\Dif_\ell(A)|}{|A|}|G:A|^-
\end{equation}
(b)
If $A\subseteq H\leq G$ and $A$ is right index stable in $G$, then it is so in $H$ and we have
 $$|G:A|_r=|G:H||H:A|_r.$$
Therefore, if $G$  is index stable, then every $H\leq G$ is
so, and $|G:A|=|G:H||H:A|$, for all $A\subseteq H$. \\
(c) Every finite group containing a non-index stable subgroup is non-index stable.\\

There is an important property for sub-indices of finite group subsets that
if $|A|>\frac{|G|}{2}$, then $Dif_\ell(A)=Dif_r(A)=G$, $A$ is index stable, and so $|G:A|=1$. The converse is not true (e.g.,
if $G:=\mathbb{Z}_6, A:=\{0,1,3\}$, then $|G:A|=2$). But as a weak converse, if $|G:A|=1$, then
the inequality $(2.6)$ implies that $\frac{|G|}{|A|^2-|A|+1}\leq 1$, and so
$|A|\geq \frac{1}{2}+\sqrt{|G|-\frac{3}{4}}> \sqrt{|G|}$ (if $G\neq 1$). We will state some counter examples for
related questions in the next section.
In view of this fact, for computational aspects of sub-indices, one may consider the partition
$\{\mathcal{A}_r\}_{r=1}^{|G|}$ for $2^G\setminus\{\emptyset\}$  where
$$
\mathcal{A}_r:=\mathcal{A}_r(G):=\{A\subseteq G: \frac{|G|}{r+1}<|A|\leq \frac{|G|}{r}\}
$$
All sub-indices of every element of $\mathcal{A}_r$ are $\leq r$ (because $\frac{|G|}{r+1}<|A|\leq \frac{|G|}{r}$
if and only if $\lfloor\frac{|G|}{|A|}\rfloor=r$).\\
As a weak converse, let $r$ be an integer such that $1\leq r\leq |G|$. If one of the
sub-indices of $A$ is $\leq r$, then
$$|A|\geq \frac{1}{2}+\sqrt{\frac{|G|}{r}-\frac{3}{4}}> \sqrt{\frac{|G|}{r}}.$$
Hence, $\sqrt{\frac{|G|}{r}}<|A|\leq \frac{|G|}{r}$ if $r\leq \frac{|G|}{|A|}$ (but we can not conclude that $A\in \mathcal{A}_r(G)$).
Since for $r>\frac{|G|}{2}$ and $r=1$ all elements of $\mathcal{A}_r(G)$ are index stable,
it is enough to study $\mathcal{A}_r(G)$ for $2\leq r\leq \frac{|G|}{2}$ (i.e., $\{\mathcal{A}_r\}_{r=2}^{\lfloor\frac{|G|}{2}\rfloor}$).
It is worth noting that all elements of $\mathcal{A}_2(G)$ are left and right index stable with sub-indices 1 or 2,
and they are index stable if $G$ is abelian (see Corollary 3.4 of \cite{MH3}).\\

\textbf{Computational aspects of sub-indices.}
To calculate $|G:A|^+$, the straightforward way is computing $\SubF^1_r(A)$, and then
the maximum of sizes of its elements.
But with a closer look, it can be seen that there is another algorithm, since according to (\ref{subf1})
it is enough to do the next steps:\\
(1) Start with subsets $B\ni 1$ of $\mathcal{C}_\ell(A)\cup\{1\}$ of sizes $\Bigl\lfloor \frac{|G|}{|A|}\Bigr\rfloor$, and then all subsets
of sizes $\Bigl\lfloor \frac{|G|}{|A|}\Bigr\rfloor-1$, and so on.\\
(2) Find the first such $B$ for which $B\in \SubF^1_r(A)$ and denote it by $B_0$ .\\
(3) $|G:A|^+=|B_0|$.\\
Analogous algorithm exists for computing $|G:A|^-$ (and other sub-indices).\\
We are now enable to write an appropriate GAP code for computing the right sub-indices and checking right index stability
 of subsets as \href{https://github.com/momoeysfn/Subindices/blob/main/lib/id.g}{(link to code)}.
\begin{ex}
If $G:=S_3, \ A:=\{(),(1,3,2)\}$, then $A$ is (right and left) index stable and $|G:A|=2$. Also, see
\href{https://github.com/momoeysfn/Subindices/blob/main/examples/Ex25.md}{(link to more examples)}.
\end{ex}

\subsection{A table of sub-indices for $k$-index stability of groups of small orders}
In the theory of sub-indices, we observe that the cardinality of subsets plays an important role for
index stability. Hence we recall a definition from \cite{MH2, MH3}.
\begin{defn}
Let $G$ be a finite group and $1\leq k\leq |G|$ a given integre number. We call
$G$ $k$-index stable if all its subsets of size $k$ are index stable (analogously
for left and right $k$-index stabilities). Also, we convent that $G$ is
$\kappa$-index stable for every $\kappa>|G|$.
\end{defn}
Note that a group is right $k$-index (resp. index) stable if and only if it is left $k$-index (resp. index) stable, since
$$
|G:A^{-1}|^+=|G:A|_+\; , \; |G:A^{-1}|^-=|G:A|_-\; , \; |A^{-1}|=|A|,
$$
for all $A\subseteq G$ (see Theorem 3.12(c) of \cite{MH2}).
For finite groups, we prove in the next section that right, left, and two-sided index stability of finite groups
are equivalent but this is not true for $k$-index stability (for the first counterexample, $A_4$, $D_{12}$ are right and left 6-index stable but not two-sided 6-index stable).
Now, using the Gap code \href{https://github.com/momoeysfn/Subindices/blob/main/examples/Table.g}{(link to code)} which is obtained according to the stated facts and algorithms,
we present a complete table for right and two-sided $k$-index (and index) stability of finite groups of orders$\leq 27$.
It is worth noting that many cases of the table have also theoretical evidence in \cite{MH2, MH3}.
Note that in the following table, there are columns that indicate the state of the right and
 two-sided index stability of subsets of the mentioned size with an ordered pair of 0's and 1's.
 The first component of the ordered couple corresponds to the \underline{$k$-right index stability}
  and the next one to the \underline{$k$-index stability}, where the number zero means
   that it is not established and one indicates that the related property is satisfied.
   Hence, the second component is less than or equal to the first one. For example,
   the column 6 for $A_4$ indicates that $A_4$ is right 6-index stable but not two-sided 6-index stable. Note that since every group of order$\leq 5$ is index stable and all finite groups $G$ are $k$-index stable for $k> \lfloor \frac{|G|}{2}\rfloor $, we do not mention these cases in the table.
Also, notice that every $A\in \mathcal{A}_2(G)$ is right index stable (but not necessarily index stable), thus the first component in the $k$th column
is 1 for all $\lceil\frac{|G|}{3}\rceil+1\leq k \leq  \lfloor \frac{|G|}{2}\rfloor$.

  \begin{longtable}{|l|l|l|l|l|l|l|l|l|} \caption{$k$-index stability of groups; $6 \leq |G| \leq 16$} \label{table} \\
  	\hline
  	\textbf{Group} &
  	\textbf{k=2} &
  	\textbf{k=3} &
  	\textbf{k=4} &
  	\textbf{k=5} &
  	\textbf{k=6} &
  	\textbf{k=7} &
  	\textbf{k=8} &
  	\textbf{\begin{tabular}[c]{@{}l@{}} (right) index stability\end{tabular}} \\ \hline
  	\endhead
  	{\color[HTML]{009901} $S_3$} &
  	{\color[HTML]{009901} 1-1} &
  	{\color[HTML]{009901} 1-1} &
  	&
  	&
  	&
  	&
  	&
  	{\color[HTML]{009901} Index Stable} \\ \hline
  	$C_6$ &
  	{\color[HTML]{FE0000} 0-0} &
  	1-1 &
  	&
  	&
  	&
  	&
  	&
  	None (right) index stable \\ \hline
  	{\color[HTML]{009901} $C_7$} &
  	{\color[HTML]{009901} 1-1} &
  	{\color[HTML]{009901} 1-1} &
  	&
  	&
  	&
  	&
  	&
  	{\color[HTML]{009901} Index Stable} \\ \hline
  	$C_8$ &
  	{\color[HTML]{FE0000} 0-0} &
  	1-1 &
  	1-1 &
  	&
  	&
  	&
  	&
  	None (right) index stable \\ \hline
  	{\color[HTML]{009901} $C_4 \times C_2$} &
  	{\color[HTML]{009901} 1-1} &
  	{\color[HTML]{009901} 1-1} &
  	{\color[HTML]{009901} 1-1} &
  	{\color[HTML]{009901} } &
  	&
  	&
  	&
  	{\color[HTML]{009901} Index Stable} \\ \hline
  	{\color[HTML]{009901} $D_8$} &
  	{\color[HTML]{009901} 1-1} &
  	{\color[HTML]{009901} 1-1} &
  	{\color[HTML]{009901} 1-1} &
  	{\color[HTML]{009901} } &
  	&
  	&
  	&
  	{\color[HTML]{009901} Index Stable} \\ \hline
  	{\color[HTML]{009901} $Q_8$} &
  	{\color[HTML]{009901} 1-1} &
  	{\color[HTML]{009901} 1-1} &
  	{\color[HTML]{009901} 1-1} &
  	{\color[HTML]{009901} } &
  	&
  	&
  	&
  	{\color[HTML]{009901} Index Stable} \\ \hline
  	{\color[HTML]{009901} $C_2 \times C_2 \times C_2$} &
  	{\color[HTML]{009901} 1-1} &
  	{\color[HTML]{009901} 1-1} &
  	{\color[HTML]{009901} 1-1} &
  	{\color[HTML]{009901} } &
  	&
  	&
  	&
  	{\color[HTML]{009901} Index Stable} \\ \hline
  	$C_9$ &
  	{\color[HTML]{FE0000} 0-0} &
  	{\color[HTML]{FE0000} 0-0} &
  	1-1 &
  	&
  	&
  	&
  	&
  	None (right) index stable \\ \hline
  	{\color[HTML]{009901} $C_3 \times C_3$} &
  	{\color[HTML]{009901} 1-1} &
  	{\color[HTML]{009901} 1-1} &
  	{\color[HTML]{009901} 1-1} &
  	{\color[HTML]{009901} } &
  	&
  	&
  	&
  	{\color[HTML]{009901} Index Stable} \\ \hline
  	$D_{10}$ &
  	1-1 &
  	{\color[HTML]{FE0000} 0-0} &
  	1-1 &
  	1-1 &
  	&
  	&
  	&
  	{\color[HTML]{333333} None (right) index stable} \\ \hline
  	$C_{10}$ &
  	{\color[HTML]{FE0000} 0-0} &
  	{\color[HTML]{FE0000} 0-0} &
  	1-1 &
  	1-1 &
  	&
  	&
  	&
  	None (right) index stable \\ \hline
  	$C_{11}$ &
  	{\color[HTML]{FE0000} 0-0} &
  	1-1 &
  	1-1 &
  	1-1 &
  	&
  	&
  	&
  	{\color[HTML]{333333} None (right) index stable} \\ \hline
  	$C_3 : C_4$ &
  	{\color[HTML]{FE0000} 0-0} &
  	{\color[HTML]{FE0000} 0-0} &
  	{\color[HTML]{FE0000} 0-0} &
  	1-1 &
  	1-1 &
  	&
  	&
  	{\color[HTML]{333333} None (right) index stable} \\ \hline
  	$C_{12}$ &
  	{\color[HTML]{FE0000} 0-0} &
  	{\color[HTML]{FE0000} 0-0} &
  	{\color[HTML]{FE0000} 0-0} &
  	1-1 &
  	1-1 &
  	&
  	&
  	{\color[HTML]{333333} None (right) index stable} \\ \hline
  	$A_4$ &
  	1-1 &
  	{\color[HTML]{FE0000} 0-0} &
  	{\color[HTML]{FE0000} 0-0} &
  	1-1 &
  	1-0 &
  	&
  	&
  	{\color[HTML]{333333} None (right) index stable} \\ \hline
  	$D_{12}$ &
  	{\color[HTML]{FE0000} 0-0} &
  	{\color[HTML]{FE0000} 0-0} &
  	{\color[HTML]{FE0000} 0-0} &
  	1-1 &
  	1-0 &
  	&
  	&
  	{\color[HTML]{333333} None (right) index stable} \\ \hline
  	$C_6 \times C_2$ &
  	{\color[HTML]{FE0000} 0-0} &
  	{\color[HTML]{FE0000} 0-0} &
  	{\color[HTML]{FE0000} 0-0} &
  	1-1 &
  	1-1 &
  	&
  	&
  	{\color[HTML]{333333} None (right) index stable} \\ \hline
  	$C_{13}$ &
  	{\color[HTML]{FE0000} 0-0} &
  	{\color[HTML]{FE0000} 0-0} &
  	{\color[HTML]{FE0000} 0-0} &
  	1-1 &
  	1-1 &
  	&
  	&
  	{\color[HTML]{333333} None (right) index stable} \\ \hline
  	$D_{14}$ &
  	1-1 &
  	{\color[HTML]{FE0000} 0-0} &
  	{\color[HTML]{FE0000} 0-0} &
  	1-1 &
  	1-0 &
  	1-0 &
  	&
  	{\color[HTML]{333333} None (right) index stable} \\ \hline
  	$C_{14}$ &
  	{\color[HTML]{FE0000} 0-0} &
  	{\color[HTML]{FE0000} 0-0} &
  	{\color[HTML]{FE0000} 0-0} &
  	1-1 &
  	1-1 &
  	1-1 &
  	&
  	{\color[HTML]{333333} None (right) index stable} \\ \hline
  	$C_{15}$ &
  	{\color[HTML]{FE0000} 0-0} &
  	{\color[HTML]{FE0000} 0-0} &
  	{\color[HTML]{FE0000} 0-0} &
  	{\color[HTML]{FE0000} 0-0} &
  	1-1 &
  	1-1 &
  	&
  	{\color[HTML]{333333} None (right) index stable} \\ \hline
  	$C_{16}$ &
  	{\color[HTML]{FE0000} 0-0} &
  	{\color[HTML]{FE0000} 0-0} &
  	{\color[HTML]{FE0000} 0-0} &
  	{\color[HTML]{FE0000} 0-0} &
  	1-1 &
  	1-1 &
  	1-1 &
  	{\color[HTML]{333333} None (right) index stable} \\ \hline
  	$C_4 \times C_4$ &
  	1-1 &
  	{\color[HTML]{FE0000} 0-0} &
  	{\color[HTML]{FE0000} 0-0} &
  	1-1 &
  	1-1 &
  	1-1 &
  	1-1 &
  	None (right) index stable \\ \hline
  	$(C_4 \times C_2) : C_2$ &
  	1-1 &
  	1-1 &
  	{\color[HTML]{FE0000} 0-0} &
  	1-1 &
  	1-0 &
  	1-0 &
  	1-0 &
  	{\color[HTML]{333333} None (right) index stable} \\ \hline
  	$C_4 : C_4$ &
  	1-1 &
  	1-1 &
  	{\color[HTML]{FE0000} 0-0} &
  	1-1 &
  	1-1 &
  	1-1 &
  	1-1 &
  	{\color[HTML]{333333} None (right) index stable} \\ \hline
  	$C_8 \times C_2$ &
  	{\color[HTML]{FE0000} 0-0} &
  	{\color[HTML]{FE0000} 0-0} &
  	{\color[HTML]{FE0000} 0-0} &
  	1-1 &
  	1-1 &
  	1-1 &
  	1-1 &
  	{\color[HTML]{333333} None (right) index stable} \\ \hline
  	$C_8 : C_2$ &
  	{\color[HTML]{FE0000} 0-0} &
  	{\color[HTML]{FE0000} 0-0} &
  	{\color[HTML]{FE0000} 0-0} &
  	1-1 &
  	1-0 &
  	1-0 &
  	1-0 &
  	{\color[HTML]{333333} None (right) index stable} \\ \hline
  	$D_{16}$ &
  	{\color[HTML]{FE0000} 0-0} &
  	{\color[HTML]{FE0000} 0-0} &
  	{\color[HTML]{FE0000} 0-0} &
  	{\color[HTML]{FE0000} 0-0} &
  	1-1 &
  	1-0 &
  	1-0 &
  	{\color[HTML]{333333} None (right) index stable} \\ \hline
  	${QD}_{16}$ &
  	{\color[HTML]{FE0000} 0-0} &
  	1-1 &
  	{\color[HTML]{FE0000} 0-0} &
  	1-1 &
  	1-0 &
  	1-0 &
  	1-0 &
  	None (right) index stable \\ \hline
  	$Q_{16}$ &
  	{\color[HTML]{FE0000} 0-0} &
  	{\color[HTML]{FE0000} 0-0} &
  	{\color[HTML]{FE0000} 0-0} &
  	1-1 &
  	1-1 &
  	1-1 &
  	1-1 &
  	{\color[HTML]{333333} None (right) index stable} \\ \hline
  	$C_4 \times C_2 \times C_2$ &
  	1-1 &
  	1-1 &
  	{\color[HTML]{FE0000} 0-0} &
  	1-1 &
  	1-1 &
  	1-1 &
  	1-1 &
  	{\color[HTML]{333333} None (right) index stable} \\ \hline
  	$C_2 \times D_8$ &
  	1-1 &
  	1-1 &
  	{\color[HTML]{FE0000} 0-0} &
  	1-1 &
  	1-1 &
  	1-0 &
  	1-0 &
  	{\color[HTML]{333333} None (right) index stable} \\ \hline
  	$C_2 \times Q_8$ &
  	1-1 &
  	1-1 &
  	{\color[HTML]{FE0000} 0-0} &
  	1-1 &
  	1-1 &
  	1-1 &
  	1-1 &
  	{\color[HTML]{333333} None (right) index stable} \\ \hline
  	$(C_4 \times C_2) : C_2$ &
  	1-1 &
  	1-1 &
  	{\color[HTML]{FE0000} 0-0} &
  	1-1 &
  	1-1 &
  	{\color[HTML]{FE0000} 1-0} &
  	1-0 &
  	{\color[HTML]{333333} None (right) index stable} \\ \hline
  	{\color[HTML]{32CB00} $C_2 \times C_2 \times C_2 \times C_2$} &
  	{\color[HTML]{32CB00} 1-1} &
  	{\color[HTML]{32CB00} 1-1} &
  	{\color[HTML]{32CB00} 1-1} &
  	{\color[HTML]{32CB00} 1-1} &
  	{\color[HTML]{32CB00} 1-1} &
  	{\color[HTML]{32CB00} 1-1} &
  	{\color[HTML]{32CB00} 1-1} &
  	{\color[HTML]{009901} Index Stable} \\ \hline
  \end{longtable}

  \begin{longtable}[c]{|l|l|l|l|l|l|l|l|l|l|l|l|l|l|} \caption{$k$-index stability of groups; $17 \leq |G| \leq 27$} \\
  	\hline
  	\textbf{Group} &
  	\textbf{2} &
  	\textbf{3} &
  	\textbf{4} &
  	\textbf{5} &
  	\textbf{6} &
  	\textbf{7} &
  	\textbf{8} &
  	\textbf{9} &
  	\textbf{10} &
  	\textbf{11} &
  	\textbf{12} &
  	\textbf{13} &
  	\textbf{} \\ \hline
  	\endhead
  	$C_{17}$ &
  	{\color[HTML]{FE0000} 0-0} &
  	{\color[HTML]{FE0000} 0-0} &
  	{\color[HTML]{FE0000} 0-0} &
  	{\color[HTML]{FE0000} 0-0} &
  	1-1 &
  	1-1 &
  	1-1 &
  	&
  	&
  	&
  	&
  	&
  	\begin{tabular}[c]{@{}l@{}}None (right)\\ index stable\end{tabular} \\ \hline
  	$D_{18}$ &
  	{\color[HTML]{FE0000} 0-0} &
  	{\color[HTML]{FE0000} 0-0} &
  	{\color[HTML]{FE0000} 0-0} &
  	{\color[HTML]{FE0000} 0-0} &
  	{\color[HTML]{FE0000} 0-0} &
  	1-0 &
  	1-0 &
  	1-0 &
  	&
  	&
  	&
  	&
  	\begin{tabular}[c]{@{}l@{}}None (right)\\ index stable\end{tabular} \\ \hline
  	$C_{18}$ &
  	{\color[HTML]{FE0000} 0-0} &
  	{\color[HTML]{FE0000} 0-0} &
  	{\color[HTML]{FE0000} 0-0} &
  	{\color[HTML]{FE0000} 0-0} &
  	{\color[HTML]{FE0000} 0-0} &
  	1-1 &
  	1-1 &
  	1-1 &
  	&
  	&
  	&
  	&
  	\begin{tabular}[c]{@{}l@{}}None (right)\\ index stable\end{tabular} \\ \hline
  	$C_3 \times S_3$ &
  	{\color[HTML]{FE0000} 0-0} &
  	{\color[HTML]{FE0000} 0-0} &
  	{\color[HTML]{FE0000} 0-0} &
  	{\color[HTML]{FE0000} 0-0} &
  	{\color[HTML]{FE0000} 0-0} &
  	1-0 &
  	1-0 &
  	1-0 &
  	&
  	&
  	&
  	&
  	\begin{tabular}[c]{@{}l@{}}None (right)\\ index stable\end{tabular} \\ \hline
  	$(C_3 \times C_3) : C_2$ &
  	1-1 &
  	1-1 &
  	{\color[HTML]{FE0000} 0-0} &
  	{\color[HTML]{FE0000} 0-0} &
  	{\color[HTML]{FE0000} 0-0} &
  	1-1 &
  	1-0 &
  	1-0 &
  	&
  	&
  	&
  	&
  	\begin{tabular}[c]{@{}l@{}}None (right)\\ index stable\end{tabular} \\ \hline
  	$C_6 \times C_3$ &
  	{\color[HTML]{FE0000} 0-0} &
  	{\color[HTML]{FE0000} 0-0} &
  	{\color[HTML]{FE0000} 0-0} &
  	{\color[HTML]{FE0000} 0-0} &
  	{\color[HTML]{FE0000} 0-0} &
  	1-1 &
  	1-1 &
  	1-1 &
  	&
  	&
  	&
  	&
  	\begin{tabular}[c]{@{}l@{}}None (right)\\ index stable\end{tabular} \\ \hline
  	$C_{19}$ &
  	{\color[HTML]{FE0000} 0-0} &
  	{\color[HTML]{FE0000} 0-0} &
  	{\color[HTML]{FE0000} 0-0} &
  	{\color[HTML]{FE0000} 0-0} &
  	{\color[HTML]{FE0000} 0-0} &
  	1-1 &
  	1-1 &
  	1-1 &
  	&
  	&
  	&
  	&
  	\begin{tabular}[c]{@{}l@{}}None (right)\\ index stable\end{tabular} \\ \hline
  	\begin{tabular}[c]{@{}l@{}}$C_5 : C_4$\\ $( \mbox{Dic}20 )$\end{tabular} &
  	{\color[HTML]{FE0000} 0-0} &
  	{\color[HTML]{FE0000} 0-0} &
  	{\color[HTML]{FE0000} 0-0} &
  	{\color[HTML]{FE0000} 0-0} &
  	{\color[HTML]{FE0000} 0-0} &
  	1-1 &
  	1-1 &
  	1-1 &
  	1-1 &
  	&
  	&
  	&
  	\begin{tabular}[c]{@{}l@{}}None (right)\\ index stable\end{tabular} \\ \hline
  	$C_{20}$ &
  	{\color[HTML]{FE0000} 0-0} &
  	{\color[HTML]{FE0000} 0-0} &
  	{\color[HTML]{FE0000} 0-0} &
  	{\color[HTML]{FE0000} 0-0} &
  	{\color[HTML]{FE0000} 0-0} &
  	1-1 &
  	1-1 &
  	1-1 &
  	1-1 &
  	&
  	&
  	&
  	\begin{tabular}[c]{@{}l@{}}None (right)\\ index stable\end{tabular} \\ \hline
  	\begin{tabular}[c]{@{}l@{}}$C_5 : C_4$\\  $(GA(1,5) )$\end{tabular} &
  	1-1 &
  	{\color[HTML]{FE0000} 0-0} &
  	{\color[HTML]{FE0000} 0-0} &
  	{\color[HTML]{FE0000} 0-0} &
  	{\color[HTML]{FE0000} 0-0} &
  	1-0 &
  	1-0 &
  	1-0 &
  	1-0 &
  	&
  	&
  	&
  	\begin{tabular}[c]{@{}l@{}}None (right)\\ index stable\end{tabular} \\ \hline
  	$D_{20}$ &
  	{\color[HTML]{FE0000} 0-0} &
  	{\color[HTML]{FE0000} 0-0} &
  	{\color[HTML]{FE0000} 0-0} &
  	{\color[HTML]{FE0000} 0-0} &
  	{\color[HTML]{FE0000} 0-0} &
  	1-1 &
  	1-0 &
  	1-0 &
  	1-0 &
  	&
  	&
  	&
  	\begin{tabular}[c]{@{}l@{}}None (right)\\ index stable\end{tabular} \\ \hline
  	$C_{10} \times C_2$ &
  	{\color[HTML]{FE0000} 0-0} &
  	{\color[HTML]{FE0000} 0-0} &
  	{\color[HTML]{FE0000} 0-0} &
  	{\color[HTML]{FE0000} 0-0} &
  	{\color[HTML]{FE0000} 0-0} &
  	1-1 &
  	1-1 &
  	1-1 &
  	1-1 &
  	&
  	&
  	&
  	\begin{tabular}[c]{@{}l@{}}None (right)\\ index stable\end{tabular} \\ \hline
  	$C_7 : C_3$ &
  	1-1 &
  	{\color[HTML]{FE0000} 0-0} &
  	{\color[HTML]{FE0000} 0-0} &
  	{\color[HTML]{FE0000} 0-0} &
  	{\color[HTML]{FE0000} 0-0} &
  	{\color[HTML]{FE0000} 0-0} &
  	1-0 &
  	1-0 &
  	1-1 &
  	&
  	&
  	&
  	\begin{tabular}[c]{@{}l@{}}None (right)\\ index stable\end{tabular} \\ \hline
  	$C_{21}$ &
  	{\color[HTML]{FE0000} 0-0} &
  	{\color[HTML]{FE0000} 0-0} &
  	{\color[HTML]{FE0000} 0-0} &
  	{\color[HTML]{FE0000} 0-0} &
  	{\color[HTML]{FE0000} 0-0} &
  	{\color[HTML]{FE0000} 0-0} &
  	1-1 &
  	1-1 &
  	1-1 &
  	&
  	&
  	&
  	\begin{tabular}[c]{@{}l@{}}None (right)\\ index stable\end{tabular} \\ \hline
  	$D_{22}$ &
  	{\color[HTML]{FE0000} 0-0} &
  	{\color[HTML]{FE0000} 0-0} &
  	{\color[HTML]{FE0000} 0-0} &
  	{\color[HTML]{FE0000} 0-0} &
  	{\color[HTML]{FE0000} 0-0} &
  	{\color[HTML]{FE0000} 0-0} &
  	1-0 &
  	1-0 &
  	1-0 &
  	1-0 &
  	&
  	&
  	\begin{tabular}[c]{@{}l@{}}None (right)\\ index stable\end{tabular} \\ \hline
  	$C_{22}$ &
  	{\color[HTML]{FE0000} 0-0} &
  	{\color[HTML]{FE0000} 0-0} &
  	{\color[HTML]{FE0000} 0-0} &
  	{\color[HTML]{FE0000} 0-0} &
  	{\color[HTML]{FE0000} 0-0} &
  	{\color[HTML]{FE0000} 0-0} &
  	1-1 &
  	1-1 &
  	1-1 &
  	1-1 &
  	&
  	&
  	\begin{tabular}[c]{@{}l@{}}None (right)\\ index stable\end{tabular} \\ \hline
  	$C_{23}$ &
  	{\color[HTML]{FE0000} 0-0} &
  	{\color[HTML]{FE0000} 0-0} &
  	{\color[HTML]{FE0000} 0-0} &
  	{\color[HTML]{FE0000} 0-0} &
  	{\color[HTML]{FE0000} 0-0} &
  	{\color[HTML]{FE0000} 0-0} &
  	1-1 &
  	1-1 &
  	1-1 &
  	1-1 &
  	&
  	&
  	\begin{tabular}[c]{@{}l@{}}None (right)\\ index stable\end{tabular} \\ \hline
  	$C_3 : C_8$ &
  	{\color[HTML]{FE0000} 0-0} &
  	{\color[HTML]{FE0000} 0-0} &
  	{\color[HTML]{FE0000} 0-0} &
  	{\color[HTML]{FE0000} 0-0} &
  	{\color[HTML]{FE0000} 0-0} &
  	{\color[HTML]{FE0000} 0-0} &
  	{\color[HTML]{FE0000} 0-0} &
  	1-0 &
  	1-0 &
  	1-0 &
  	1-1 &
  	&
  	\begin{tabular}[c]{@{}l@{}}None (right)\\ index stable\end{tabular} \\ \hline
  	$C_{24}$ &
  	{\color[HTML]{FE0000} 0-0} &
  	{\color[HTML]{FE0000} 0-0} &
  	{\color[HTML]{FE0000} 0-0} &
  	{\color[HTML]{FE0000} 0-0} &
  	{\color[HTML]{FE0000} 0-0} &
  	{\color[HTML]{FE0000} 0-0} &
  	{\color[HTML]{FE0000} 0-0} &
  	1-1 &
  	1-1 &
  	1-1 &
  	1-1 &
  	&
  	\begin{tabular}[c]{@{}l@{}}None (right)\\ index stable\end{tabular} \\ \hline
  	$SL(2,3)$ &
  	{\color[HTML]{FE0000} 0-0} &
  	{\color[HTML]{FE0000} 0-0} &
  	{\color[HTML]{FE0000} 0-0} &
  	{\color[HTML]{FE0000} 0-0} &
  	{\color[HTML]{FE0000} 0-0} &
  	{\color[HTML]{FE0000} 0-0} &
  	{\color[HTML]{FE0000} 0-0} &
  	1-0 &
  	1-0 &
  	1-0 &
  	1-0 &
  	&
  	\begin{tabular}[c]{@{}l@{}}None (right)\\ index stable\end{tabular} \\ \hline
  	$C_3 : Q_8$ &
  	{\color[HTML]{FE0000} 0-0} &
  	{\color[HTML]{FE0000} 0-0} &
  	{\color[HTML]{FE0000} 0-0} &
  	{\color[HTML]{FE0000} 0-0} &
  	{\color[HTML]{FE0000} 0-0} &
  	{\color[HTML]{FE0000} 0-0} &
  	{\color[HTML]{FE0000} 0-0} &
  	1-0 &
  	1-0 &
  	1-0 &
  	1-0 &
  	&
  	\begin{tabular}[c]{@{}l@{}}None (right)\\ index stable\end{tabular} \\ \hline
  	$C_4 \times S_3$ &
  	{\color[HTML]{FE0000} 0-0} &
  	{\color[HTML]{FE0000} 0-0} &
  	{\color[HTML]{FE0000} 0-0} &
  	{\color[HTML]{FE0000} 0-0} &
  	{\color[HTML]{FE0000} 0-0} &
  	{\color[HTML]{FE0000} 0-0} &
  	{\color[HTML]{FE0000} 0-0} &
  	1-0 &
  	1-0 &
  	1-0 &
  	1-0 &
  	&
  	\begin{tabular}[c]{@{}l@{}}None (right)\\ index stable\end{tabular} \\ \hline
  	$D_{24}$ &
  	{\color[HTML]{FE0000} 0-0} &
  	{\color[HTML]{FE0000} 0-0} &
  	{\color[HTML]{FE0000} 0-0} &
  	{\color[HTML]{FE0000} 0-0} &
  	{\color[HTML]{FE0000} 0-0} &
  	{\color[HTML]{FE0000} 0-0} &
  	{\color[HTML]{FE0000} 0-0} &
  	1-0 &
  	1-0 &
  	1-0 &
  	1-0 &
  	&
  	\begin{tabular}[c]{@{}l@{}}None (right)\\ index stable\end{tabular} \\ \hline
  	$C_2 \times (C_3 : C_4)$ &
  	{\color[HTML]{FE0000} 0-0} &
  	{\color[HTML]{FE0000} 0-0} &
  	{\color[HTML]{FE0000} 0-0} &
  	{\color[HTML]{FE0000} 0-0} &
  	{\color[HTML]{FE0000} 0-0} &
  	{\color[HTML]{FE0000} 0-0} &
  	{\color[HTML]{FE0000} 0-0} &
  	1-0 &
  	1-0 &
  	1-0 &
  	1-0 &
  	&
  	\begin{tabular}[c]{@{}l@{}}None (right)\\ index stable\end{tabular} \\ \hline
  	$(C_6 \times C_2) : C_2$ &
  	{\color[HTML]{FE0000} 0-0} &
  	{\color[HTML]{FE0000} 0-0} &
  	{\color[HTML]{FE0000} 0-0} &
  	{\color[HTML]{FE0000} 0-0} &
  	{\color[HTML]{FE0000} 0-0} &
  	{\color[HTML]{FE0000} 0-0} &
  	{\color[HTML]{FE0000} 0-0} &
  	1-0 &
  	1-0 &
  	1-0 &
  	1-0 &
  	&
  	\begin{tabular}[c]{@{}l@{}}None (right)\\ index stable\end{tabular} \\ \hline
  	$C_{12} \times C_2$ &
  	{\color[HTML]{FE0000} 0-0} &
  	{\color[HTML]{FE0000} 0-0} &
  	{\color[HTML]{FE0000} 0-0} &
  	{\color[HTML]{FE0000} 0-0} &
  	{\color[HTML]{FE0000} 0-0} &
  	{\color[HTML]{FE0000} 0-0} &
  	{\color[HTML]{FE0000} 0-0} &
  	1-1 &
  	1-1 &
  	1-1 &
  	1-1 &
  	&
  	\begin{tabular}[c]{@{}l@{}}None (right)\\ index stable\end{tabular} \\ \hline
  	$C_3 \times D_8$ &
  	{\color[HTML]{FE0000} 0-0} &
  	{\color[HTML]{FE0000} 0-0} &
  	{\color[HTML]{FE0000} 0-0} &
  	{\color[HTML]{FE0000} 0-0} &
  	{\color[HTML]{FE0000} 0-0} &
  	{\color[HTML]{FE0000} 0-0} &
  	{\color[HTML]{FE0000} 0-0} &
  	1-0 &
  	1-0 &
  	1-0 &
  	1-0 &
  	&
  	\begin{tabular}[c]{@{}l@{}}None (right)\\ index stable\end{tabular} \\ \hline
  	$C_3 \times Q_8$ &
  	{\color[HTML]{FE0000} 0-0} &
  	{\color[HTML]{FE0000} 0-0} &
  	{\color[HTML]{FE0000} 0-0} &
  	{\color[HTML]{FE0000} 0-0} &
  	{\color[HTML]{FE0000} 0-0} &
  	{\color[HTML]{FE0000} 0-0} &
  	{\color[HTML]{FE0000} 0-0} &
  	1-0 &
  	1-0 &
  	1-1 &
  	1-1 &
  	&
  	\begin{tabular}[c]{@{}l@{}}None (right)\\ index stable\end{tabular} \\ \hline
  	$S_4$ &
  	1-1 &
  	{\color[HTML]{FE0000} 0-0} &
  	{\color[HTML]{FE0000} 0-0} &
  	{\color[HTML]{FE0000} 0-0} &
  	{\color[HTML]{FE0000} 0-0} &
  	{\color[HTML]{FE0000} 0-0} &
  	{\color[HTML]{FE0000} 0-0} &
  	1-0 &
  	1-0 &
  	1-0 &
  	1-0 &
  	&
  	\begin{tabular}[c]{@{}l@{}}None (right)\\ index stable\end{tabular} \\ \hline
  	$C_2 \times A_4$ &
  	{\color[HTML]{FE0000} 0-0} &
  	{\color[HTML]{FE0000} 0-0} &
  	{\color[HTML]{FE0000} 0-0} &
  	{\color[HTML]{FE0000} 0-0} &
  	{\color[HTML]{FE0000} 0-0} &
  	{\color[HTML]{FE0000} 0-0} &
  	{\color[HTML]{FE0000} 0-0} &
  	1-0 &
  	1-0 &
  	1-0 &
  	1-0 &
  	&
  	\begin{tabular}[c]{@{}l@{}}None (right)\\ index stable\end{tabular} \\ \hline
  	$C_2 \times C_2 \times S_3$ &
  	{\color[HTML]{FE0000} 0-0} &
  	{\color[HTML]{FE0000} 0-0} &
  	{\color[HTML]{FE0000} 0-0} &
  	{\color[HTML]{FE0000} 0-0} &
  	{\color[HTML]{FE0000} 0-0} &
  	{\color[HTML]{FE0000} 0-0} &
  	{\color[HTML]{FE0000} 0-0} &
  	1-0 &
  	1-0 &
  	1-0 &
  	1-0 &
  	&
  	\begin{tabular}[c]{@{}l@{}}None (right)\\ index stable\end{tabular} \\ \hline
  	$C_6 \times C_2 \times C_2$ &
  	{\color[HTML]{FE0000} 0-0} &
  	{\color[HTML]{FE0000} 0-0} &
  	{\color[HTML]{FE0000} 0-0} &
  	{\color[HTML]{FE0000} 0-0} &
  	{\color[HTML]{FE0000} 0-0} &
  	{\color[HTML]{FE0000} 0-0} &
  	{\color[HTML]{FE0000} 0-0} &
  	1-1 &
  	1-1 &
  	1-1 &
  	1-1 &
  	&
  	\begin{tabular}[c]{@{}l@{}}None (right)\\ index stable\end{tabular} \\ \hline
  	$C_{25}$ &
  	{\color[HTML]{FE0000} 0-0} &
  	{\color[HTML]{FE0000} 0-0} &
  	{\color[HTML]{FE0000} 0-0} &
  	{\color[HTML]{FE0000} 0-0} &
  	{\color[HTML]{FE0000} 0-0} &
  	{\color[HTML]{FE0000} 0-0} &
  	{\color[HTML]{FE0000} 0-0} &
  	1-1 &
  	1-1 &
  	1-1 &
  	1-1 &
  	&
  	\begin{tabular}[c]{@{}l@{}}None (right)\\ index stable\end{tabular} \\ \hline
  	$C_5 \times C_5$ &
  	1-1 &
  	{\color[HTML]{FE0000} 0-0} &
  	{\color[HTML]{FE0000} 0-0} &
  	{\color[HTML]{FE0000} 0-0} &
  	{\color[HTML]{FE0000} 0-0} &
  	1-1 &
  	1-1 &
  	1-1 &
  	1-1 &
  	1-1 &
  	1-1 &
  	&
  	\begin{tabular}[c]{@{}l@{}}None (right)\\ index stable\end{tabular} \\ \hline
  	$D_{26}$ &
  	{\color[HTML]{FE0000} 0-0} &
  	{\color[HTML]{FE0000} 0-0} &
  	{\color[HTML]{FE0000} 0-0} &
  	{\color[HTML]{FE0000} 0-0} &
  	{\color[HTML]{FE0000} 0-0} &
  	{\color[HTML]{FE0000} 0-0} &
  	1-0 &
  	1-0 &
  	1-0 &
  	1-0 &
  	1-{\color[HTML]{FE0000}0} &
  	1-{\color[HTML]{FE0000}0}&
  	\begin{tabular}[c]{@{}l@{}}None (right)\\ index stable\end{tabular} \\ \hline
  	$C_{26}$ &
  	{\color[HTML]{FE0000} 0-0} &
  	{\color[HTML]{FE0000} 0-0} &
  	{\color[HTML]{FE0000} 0-0} &
  	{\color[HTML]{FE0000} 0-0} &
  	{\color[HTML]{FE0000} 0-0} &
  	{\color[HTML]{FE0000} 0-0} &
  	{\color[HTML]{FE0000} 0-0} &
  	1-1&
  	1-1&
  	1-1&
  	1-1&
  	1-1&
  	\begin{tabular}[c]{@{}l@{}}None (right)\\ index stable\end{tabular} \\ \hline
  	$C_{27}$ &
  	{\color[HTML]{FE0000} 0-0} &
  	{\color[HTML]{FE0000} 0-0} &
  	{\color[HTML]{FE0000} 0-0} &
  	{\color[HTML]{FE0000} 0-0} &
  	{\color[HTML]{FE0000} 0-0} &
  	{\color[HTML]{FE0000} 0-0} &
  	{\color[HTML]{FE0000} 0-0} &
  	{\color[HTML]{999999}1-1}&
  	1-1&
  	1-1&
  	1-1&
  	1-1&
  	\begin{tabular}[c]{@{}l@{}}None (right)\\ index stable\end{tabular} \\ \hline
  	$C_9 \times C_3$ &
  	{\color[HTML]{FE0000} 0-0} &
  	{\color[HTML]{FE0000} 0-0} &
  	{\color[HTML]{FE0000} 0-0} &
  	{\color[HTML]{FE0000} 0-0} &
  	{\color[HTML]{FE0000} 0-0} &
  	{\color[HTML]{FE0000} 0-0} &
  	{\color[HTML]{FE0000} 0-0} &
  	{\color[HTML]{FE0000} 0-0} &
  	1-1&
  	1-1&
  	1-1&
  	1-1&
  	\begin{tabular}[c]{@{}l@{}}None (right)\\ index stable\end{tabular} \\ \hline
  	$(C_3 \times C_3) : C_3$ &
  	1-1 &
  	{\color[HTML]{FE0000} 0-0} &
  	{\color[HTML]{FE0000} 0-0} &
  	{\color[HTML]{FE0000} 0-0} &
  	{\color[HTML]{FE0000} 0-0} &
  	{\color[HTML]{999999}1}-{\color[HTML]{FE0000}0} &
  	{\color[HTML]{999999}1}-{\color[HTML]{FE0000}0} &
  	{\color[HTML]{999999}1}-{\color[HTML]{FE0000}0} &
  	1-{\color[HTML]{999999}1}&
  	1-{\color[HTML]{999999}1}&
  	1-{\color[HTML]{999999}1}&
  	1-{\color[HTML]{999999}1}&
  	\begin{tabular}[c]{@{}l@{}}None (right)\\ index stable\end{tabular} \\ \hline
  	$C_9 : C_3$ &
  	{\color[HTML]{FE0000} 0-0} &
  	{\color[HTML]{FE0000} 0-0} &
  	{\color[HTML]{FE0000} 0-0} &
  	{\color[HTML]{FE0000} 0-0} &
  	{\color[HTML]{FE0000} 0-0} &
  	{\color[HTML]{FE0000} 0-0} &
  	{\color[HTML]{FE0000} 0-0} &
  	{\color[HTML]{FE0000} 0-0}&
  	1-{\color[HTML]{FE0000}0}&
  	1-{\color[HTML]{FE0000}0}&
  	1-{\color[HTML]{FE0000}0}&
  	1-{\color[HTML]{999999}1}&
  	\begin{tabular}[c]{@{}l@{}}None (right)\\ index stable\end{tabular} \\ \hline
  	$C_3 \times C_3 \times C_3$ &
  	1-1 &
  	1-1 &
  	{\color[HTML]{FE0000} 0-0} &
  	1-1 &
  	1-1 &
  	1-1 &
  	1-1 &
  	1-1 &
  	1-1&
  	1-1&
  	1-1&
  	1-1&
  	\begin{tabular}[c]{@{}l@{}}None (right)\\ index stable\end{tabular} \\ \hline
  \end{longtable}
\section{Completely determination of all finite index stable groups: main theorem}
Now, we are ready to completely characterize finite index stable groups by six steps.
This solves one of the main open problems of the theory.
\begin{theorem}
\label{finitegroups}
There are only $14$  finite $($right$)$ index stable  groups as follows:\\
$$
C_1,\;
 C_2,\;   C_2\times C_2,\; C_2\times C_2\times C_2,\; C_2\times C_2\times C_2\times C_2,
  $$
  $$
 C_3,\; C_3\times C_3,\;
 C_4,\;  C_4\times C_2,\;
   C_5,\;
       C_7,\;
       S_3,\;
   D_8,\; Q_8
$$
$($i.e., all groups of orders $\leq 9$ except $C_6$, $C_8$, $C_9$ together with $C_2\times C_2\times C_2\times C_2)$.
\end{theorem}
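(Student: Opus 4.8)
The plan is to prove the two inclusions separately: that each of the fourteen listed groups is index stable, and that there is no other finite index stable group, the second part reducing an a priori infinite problem to the computational data of the tables of Section~2 (orders $\le 27$) together with a short list of ``irreducible'' cases and an induction on the order. I would label the verification of the fourteen groups as Step~1: for those of order $<8$ this is the already recorded fact that every group of order $<8$ other than $C_6$ is index stable, yielding $C_1,\dots,C_5,C_7,S_3$; the groups $C_2^3,\ C_4\times C_2,\ D_8,\ Q_8,\ C_3\times C_3$ of orders $8,9$ and $C_2\times C_2\times C_2\times C_2$ of order $16$ come from \cite{MH2,MH3} or, where needed, from running the GAP procedures of Section~2 over all subsets (the cardinality bounds of $(2.6)$ and the reduction to $\SubF^1_r(A)$ keep this feasible; for $C_2^4$ one may alternatively argue structurally, since every subgroup and quotient is elementary abelian of rank $\le 4$, and then Corollary~3.4 of \cite{MH3} together with $(2.6)$ pins down every sub-index).

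The second part I would organise as Steps~2--4. Step~2 collects the propagation principles already available: if $G$ has a non-index-stable subgroup then $G$ is non-index-stable, and if $G$ is index stable then so is every $H\le G$; hence an index stable group has all of its subgroups, in particular all of its Sylow subgroups, index stable. Step~3 is the finite verification: reading off Table~\ref{table} and the table for $17\le|G|\le27$ (whose entries are justified by the algorithms and the bounds of Section~2), together with the recorded fact that the only index stable cyclic groups are $C_1,\dots,C_5,C_7$ and the orders~$\le 9$ case, the only index stable groups of order $\le 27$ are the fourteen listed ones, and there is no index stable group of order in $\{10,\dots,15\}\cup\{17,\dots,27\}$. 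Step~4 bounds the arithmetic of a hypothetical further index stable $G$: by Cauchy and Step~2 each prime $p\mid|G|$ gives an index stable $C_p\le G$, so $p\in\{2,3,5,7\}$; and each Sylow subgroup, being an index stable $p$-group, is by Step~3 and the cyclic fact a $2$-group of order $\le16$, a $3$-group of order $\le9$, a $5$-group of order $5$, or a $7$-group of order $7$.

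Steps~5--6 finish the argument. Step~5 disposes of the finitely many groups that are \emph{not} covered by the tables and have no proper non-index-stable subgroup, i.e.\ are minimal non-index-stable: concretely $C_2^5$ (a $2$-group of order $32$ all of whose maximal subgroups are index stable must have every maximal subgroup $\cong C_2^4$, forcing exponent $2$) and $C_7\times C_7$ (any group whose order is divisible by $7^2$ has a Sylow $7$-subgroup containing $C_{49}$, handled by the cyclic fact, or equal to $C_7\times C_7$). For each of these one exhibits an explicit subset $A$ and, via the construction of Theorem~\ref{rbsfalg} together with $(2.6)$ and the partition $\{\mathcal{A}_r\}$, two right sub-factors of $A$ of different cardinalities, proving $|G:A|^-<|G:A|^+$. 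Step~6 is the induction on $|G|$: if $|G|>27$ and $G$ were index stable, then every maximal subgroup $M$ is index stable, and being of order $>16$ would have to lie in $\{17,\dots,27\}$ (impossible by Step~3) or have order $>27$ (impossible by induction), so $|M|\le16$; using Step~4, Hall's theorem in the solvable case (and the presence of $A_4$ or $C_6$ in the non-solvable case, where $|G|\ge60$), one checks that such a $G$ must contain one of the non-index-stable groups already exhibited --- $C_6$, an order-$12$ group, $C_{14}$, $C_{15}$, an order-$21$ group, an order-$27$ $3$-group, $C_5\times C_5$, $C_7\times C_7$, or $C_2^5$ --- or be one of them, a contradiction. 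Hence no index stable group has order $>27$, and with Step~3 the list of fourteen is complete.

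The main obstacle is Step~5 together with the termination of the induction in Step~6: the naive ``forbidden subgroup'' strategy does not close by itself because $C_p$ (large $p$), $C_2^5$, and $C_7\times C_7$ are minimal non-index-stable, so each must be shown non-index-stable directly. For $C_p$ this is the cited classification of index stable cyclic groups; for $C_2^5$ and $C_7\times C_7$ one genuinely has to produce subsets whose sub-factors have unequal sizes, and finding such subsets --- ideally by a uniform recipe rather than case by case, and then arguing the size discrepancy from $(2.6)$ rather than by brute search --- is the delicate point. A secondary difficulty is making the descent of Step~6 airtight for mixed-order groups: one must verify that every $G$ with $|G|>27$, prime divisors in $\{2,3,5,7\}$, and Sylow subgroups as in Step~4 possesses a subgroup of one of the forbidden orders (or a copy of $A_4$, $C_2^5$, $C_7\times C_7$, and so on), which is a short but genuinely case-by-case Sylow and Hall analysis.
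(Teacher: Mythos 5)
Your overall skeleton (verify the fourteen groups, propagate non-index-stability through subgroups, read off orders $\le 27$ from the tables, bound the Sylow structure, handle the ``minimal'' bad groups directly) is the same as the paper's, but your Step~6 descent has a genuine hole: your list of minimal non-index-stable groups is incomplete. The Frobenius groups $C_2^3\rtimes C_7$ of order $56$ and $C_2^4\rtimes C_5$ of order $80$ (with $C_7$, resp.\ $C_5$, acting irreducibly and fixed-point-freely) satisfy every constraint you impose: their prime divisors lie in $\{2,3,5,7\}$, their Sylow subgroups are $C_2^3$ or $C_2^4$ and $C_7$ or $C_5$, and \emph{every} proper subgroup is one of $C_2^k$ ($k\le 4$), $C_5$, or $C_7$ --- all on your index stable list. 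In particular they contain none of the forbidden subgroups you enumerate ($C_6$, an order-$12$ group, $C_{14}$, $C_{15}$, an order-$21$ group, an order-$27$ $3$-group, $C_5\times C_5$, $C_7\times C_7$, $C_2^5$), nor are they among them, so your claimed contradiction ``$G$ must contain one of the non-index-stable groups already exhibited or be one of them'' simply fails for these two groups, and the induction does not close. The paper isolates exactly these two groups (its Step~5, Case~1) as the only groups in the range $2^4<|G|\le 2^7$ all of whose proper subgroups are index stable, and disposes of them by exhibiting, via GAP, explicit subsets $A$ with two right sub-factors of different sizes; any correct proof must do something equivalent, since no subgroup-propagation argument can touch them.

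Two secondary points. First, even after adding these two groups to the forbidden list, your assertion that a ``short but genuinely case-by-case Sylow and Hall analysis'' covers all remaining orders is doing a lot of unexamined work: the paper does not attempt this purely theoretically, but instead checks all admissible orders up to $1680$ computationally and then treats the two unsupported orders $2520$ and $5040$ by a Burnside transfer argument (a normal or self-normalizing Sylow $3$-subgroup forces a subgroup that is already known to be non-index-stable). If you want a non-computational Step~6 you need to actually carry out that analysis, and the $56$ and $80$ examples show it is more delicate than the sketch suggests. Second, for $C_2^5$ and $C_7\times C_7$ you correctly flag that explicit subsets are needed but never produce them; the paper gives Bergman's Hamming-ball example $A=\{00000,10000,\dots,00001\}$ with sub-factors $\{00000,11100,00111,11011\}$ and $\{00000,11111\}$ for $C_2^5$, and a GAP-found $A=\{00,01,10\}$ with sub-factors of sizes $10$ and $13$ for $C_7\times C_7$; hoping to extract the size discrepancy from the inequality $(2.6)$ alone will not work, since $(2.6)$ only brackets the sub-indices and cannot by itself show $|G:A|^-<|G:A|^+$.
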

\begin{proof}
First note that all finite (left, right) index stable groups are among groups $G$ with order $|G| = 2^q\cdot 3^r \cdot 5^s\cdot 7^t$
for non-negative integers $q, r, s, t$, by Theorem 3.12 of \cite{MH3}.
We prove this theorem by six steps. During the proof, also we use the important fact from Corollary 3.23 of \cite{MH2} (property (c)
of Subsection \ref{indsubsets}), repeatedly.\\
\textbf{Step 1.} The groups $(C_2)^5$, $(C_3)^3$, $(C_5)^2$ and $(C_7)^2$ are the first powers of
$C_2, C_3, C_5$ and $C_7$ that are not index stable.
From Table 1,2, it is evident that groups of this form with smaller orders are index stable.
Additionally, Table 1,2 shows that $C_3 \times C_3\times C_3$ and $C_5 \times C_5$ are not index stable.\\
On the other hand, $C_2\times C_2\times C_2\times C_2 \times C_2$ is not index stable.
For if
$$ A:= \{00000, 10000, 01000, 00100, 00010, 00001\}\subseteq (C_2)^5,$$
i.e., the set
of elements with at most one coordinate equal to 1.  Then, for any
$b\in (C_2)^5$,  the elements of  $A+b$  will be those which differ from  $b$  in at
most one coordinate; hence for elements  $b, b'$,  the sets  $A+b$  and
$A+b'$ will be disjoint if and only if  $b$  and  $b'$  differ in at least
3 coordinates.
So let  $B_1 = \{00000, 11100, 00111, 11011\}$,  and
$B_2 = \{00000, 11111\}$.
It is easy to check that every element of  $G$  agrees in at least 3
coordinates with an element of  $B_2$,  hence disagrees with such an
element in at most 2 coordinates; so no additional elements can be
added to  $B_2$  and keep its product with  $A$  direct; so  $B_2$  is a
sub-factor related to $ A$.  Similarly,  $B_1$  is also such a sub-factor,
and so  $A$  is not index stable.\\
Also, applying the GAP code \href{https://github.com/momoeysfn/Subindices/blob/main/lib/rbsfrandom.g}{(link to code)} for $A=\{00 , 01 , 10 \}\subseteq C_7\times C_7$
shows that $$B_1=\{00, 02, 22, 24, 26, 30, 41, 44, 52, 65\}$$ of size 10, and
$$B_2 =\{00, 04, 11, 15, 22, 33, 36, 40, 44, 51, 55, 62, 66\}$$ of size 13 are two sub-factors of
$C_7\times C_7$ relative to $A$.\\
Therefore,  $(C_2)^5$, $(C_3)^3$, $(C_5)^2$ and $(C_7)^2$ are the first powers of
$C_2, C_3, C_5$ and $C_7$ that are not index stable.
 Because in subsequent powers, we will have a subgroup that is isomorphic to these groups, therefore, they are not index stable.
 \\
\textbf{Step 2.} All finite right index stable groups $G$ are of the order
\begin{align*}
	|G| &\in \{2^q \cdot 3^r \cdot 5^s \cdot 7^t : q=0,1,2,3,4, \; r=0,1,2, \; s=0,1, \; t=0,1\} \\
	&= \{1, 2, 3, 4, 5, 6, 7, 8, 9, 10, 12, 14, 15, 16, 18, \ldots, 1680, 2520, 5040\}
\end{align*}
Because for larger powers, we will have a subgroup of order $32$, $27$, $25$  or $49$. It is clear from Table 1,2 that there is no index stable group of order $25$ and $27$. For the order of $49$, we have two groups, $C_7 \times C_7$ and $C_{49}$, both of which are not index stable. For the order of $32$, after considering the proper subgroups of these groups and examining Table 1,2, only the group ${(C_2)}^5$ has the property that all of its proper subgroups are index stable. However, as we have seen, that group itself is not index stable. \\
\textbf{Step 3.} Right index stability of groups $G$ of orders $1 < |G| \leq 2^4$: from Table 1,2, it can be deduced that only all groups with orders $\leq 9$, except $C_6$, $C_8$, $C_9$, along with $C_2 \times C_2 \times C_2 \times C_2$, are index stable.
\\
\textbf{Step 4.} Index stability of abelian groups $G$ of orders $|G| > 2^4$: such an index stable group does not exist; according to Theorem 3.12 from \cite{MH3}, it must be a $p$-group with the specified order. If $p$ is $3$, 5, or $7$, and $|G| > 2^4$, they do not conform to the form mentioned earlier. In the case of a $2$-group, considering the abelian decomposition, it can only be a power of $C_2$; otherwise, it does not possess the property that all of its proper subgroups are index stable. This is evident from Table 1,2, where $C_8$, $C_4 \times C_4$, and $C_2 \times C_2 \times C_4$ are not index stable, and consequently, groups containing subgroups of this form are also not index stable. Furthermore, if $|G| > 2^4$ and it is a power of $C_2$, it must include $C_2 \times C_2 \times C_2 \times C_2 \times C_2$, which implies it is not index stable.  \\
For continuation of the proof, using the code \href{https://github.com/momoeysfn/Subindices/blob/main/examples/IndexStability.g}{(link to code)}, we consider non-abelian groups of the specified order form within intervals of powers of 2. For each group $G$ with an order between $2^4$ and $2^5$, all of its proper subgroups have orders ranging from 1 to $2^4$, and the index stability of these subgroups is known. In each step of this procedure we examine the index stability of groups that possess the property of having all of their proper subgroups index stable. This process helps us determine all index stable groups up to order $2^5$. In this manner, an inductive approach allows us to determine the status of groups from $2^n$ to $2^{n+1}$ once it is established up to $2^n$. Any special cases with all subgroups being index stable are noted below and if such cases do not exist, we have moved on from that interval. Since the orders must adhere to the form mentioned in step two, this process is finite.\\
\textbf{Step 5.} Right index stability of non-abelian groups $G$ of order $2^4 < |G| \leq 1680$:  we have two cases :\\
\textbf{Case 1.} Right index stability of $G$ with $2^4 < |G| \leq 2^7$: only
$C_2^3:C_7$ and $C_2^4:C_5$ have the property that
all their proper subgroups are index stable. For these two groups we have examples of non right index stable
subsets as follows \href{https://github.com/momoeysfn/Subindices/blob/main/lib/th31exmpls.md}{(link to recorded output)}.\\
\textbf{Case 2.}  Right index stability of $G$ with $2^7 < |G| \leq 1680$:
all such groups $G$ contain a none right index stable subgroup obtained from the previous steps.
Hence, there are no index stable groups in this case. \\
\textbf{Step 6.} The remains groups are those whose orders are $2520=2^3\cdot3^2\cdot5\cdot7=\frac{7!}{2}$ or
$5040=2^4\cdot3^2\cdot5\cdot7=7!$  which GAP does not support them. Fortunately, we can give a
theoretical proof for their non-index stability as follows.
A Sylow 3-subgroup of such groups would either be normal, or self-normalizing, in which case
there would be a normal 3-complement by Burnside's Transfer Theorem. Therefore they contain some none right
index stable subgroup, and so the proof is complete.
\end{proof}
\begin{cor} \label{rlfinite}
For a finite group $G$ the followings are equivalent:\\
$($a$)$ $G$ is index stable;\\
$($b$)$ $G$ is right index stable;\\
$($c$)$ $G$ is left index stable;\\
$($d$)$ $G$ is one of the 14 groups mentioned in Theorem 3.1 $($up to isomorphism$)$.\\
\end{cor}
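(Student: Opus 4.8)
The plan is to establish the cycle $(a)\Rightarrow(b)\Rightarrow(d)\Rightarrow(a)$ together with the equivalence $(b)\Leftrightarrow(c)$; essentially all of the substantive work has already been carried out in Theorem~\ref{finitegroups}, so the corollary amounts to assembling it.

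First, $(a)\Rightarrow(b)$ is immediate from the definitions: if every subset $A\subseteq G$ has all four of its sub-indices equal, then in particular $|G:A|^{+}=|G:A|^{-}$ for every $A$, which is exactly right index stability. For $(b)\Leftrightarrow(c)$ I would invoke the inversion identities $|G:A^{-1}|^{+}=|G:A|_{+}$ and $|G:A^{-1}|^{-}=|G:A|_{-}$ (Theorem 3.12(c) of \cite{MH2}) together with the fact that $A\mapsto A^{-1}$ is a bijection of $2^{G}$: the condition ``$|G:A|^{+}=|G:A|^{-}$ for all $A\subseteq G$'' is then literally the same as the condition ``$|G:A|_{+}=|G:A|_{-}$ for all $A\subseteq G$'', i.e., $G$ is right index stable if and only if it is left index stable.

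The implication $(b)\Rightarrow(d)$ is precisely the statement of Theorem~\ref{finitegroups}. It then remains to prove $(d)\Rightarrow(a)$, namely that each of the $14$ listed groups is two-sided index stable; this step has to be argued concretely, since right index stability of $G$ does not a priori guarantee that every subset $A$ also satisfies $|G:A|_{r}=|G:A|_{\ell}$. For the eight groups of order $<8$ other than $C_{6}$ (that is, $C_{1},C_{2},C_{2}\times C_{2},C_{3},C_{4},C_{5},S_{3},C_{7}$), two-sided index stability is the example already recorded in Section~2. For the remaining six, namely $C_{2}\times C_{2}\times C_{2}$, $C_{4}\times C_{2}$, $D_{8}$, $Q_{8}$, $C_{3}\times C_{3}$ and $C_{2}\times C_{2}\times C_{2}\times C_{2}$ (orders $8,9,16$, all covered by Table~\ref{table}), every entry in the columns $k$ with $2\le k\le\lfloor|G|/2\rfloor$ is $1$-$1$ and the group is labelled ``Index Stable''; combined with the automatic index stability of subsets of size $1$ and of size $>\lfloor|G|/2\rfloor$, this yields index stability of every subset of these groups. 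Hence $(d)\Rightarrow(a)$, the cycle closes, and $(b)\Leftrightarrow(c)$ places the left-handed statement in the same equivalence class.

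There is no genuine obstacle beyond Theorem~\ref{finitegroups} itself; the one point requiring care is that right index stability does not formally force two-sided index stability, so the direction $(d)\Rightarrow(a)$ must be verified directly — via the ``Index Stable'' labels and the $1$-$1$ entries of Table~\ref{table} for the groups of order $6$ to $16$, and via the order-$<8$ examples of Section~2 for the rest — rather than deduced abstractly from $(b)$.
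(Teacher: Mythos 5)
Your proposal is correct and follows essentially the same route as the paper: the paper derives this corollary directly from Theorem \ref{finitegroups} (whose Step 3 certifies, via the tables, that the $14$ listed groups are two-sided index stable, not merely right index stable) together with the inversion identities $|G:A^{-1}|^{\pm}=|G:A|_{\pm}$ already stated in Subsection 2.3, which give the right/left equivalence exactly as you argue. Your explicit flagging of the $(d)\Rightarrow(a)$ step is a fair point of care, but it is the same verification the paper performs implicitly through the ``Index Stable'' entries of Tables 1--2 and the order-$<8$ examples.
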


\section{Answers and solutions to some other questions, open problems, and conjectures}
Since the theory of sub-factors and sub-indices is completely new, it is natural that many questions,
 open problems, conjectures and research projects are raised about it.
 A number of them have been mentioned in the \cite{MH2,MH3} that
 some of them are fundamental and have special importance. So far we
  have answered few of them in whole or in part.
   In order not to miss any important items about finite groups, we will list and explain one by one.\\
\subsection{Open problems and questions from \cite{MH2}}
Below, all the questions and problems raised from the first article, which is the beginning of
the theory, will be discussed in the same order and number mentioned in that paper. In some cases, we have used various methods and heuristics to search for counterexamples. If none were found, we reported the exact order up to which we confirmed their absence.\\
{\bf Question I.} Are all products of $\mathbb{Z}_2$ index stable?
What about $\mathbb{Z}_3$, $\mathbb{Z}_4$, $\mathbb{Z}_5$ and $\mathbb{Z}_7$?\\
For finite case, this question has been answered in Theorem \ref{finitegroups}. But in general, it is still open.\\
{\bf Problem II.} Let $k\geq 2$ be a given natural number.\\
{\bf (a)} Characterize all $n$ such that $\mathbb{Z}_n$ (resp. $S_n$, $A_n$, $D_{2n}$, etc.) is $k$-index stable. \\
{\bf (b)} Characterize or classify all finite groups $G$ of order $m$ such that it is $k$-index stable,
where $m$ is a fixed integer and $2\leq k\leq \lfloor\frac{m}{2}\rfloor$ (e.g., $k=2$ and $m=16$).\\
This is still open. Maybe the problem can be solved similar to what we did in Theorem \ref{finitegroups}. Regarding part (a) we will mention a conjecture after Theorem \ref{41}.\\
{\bf Problems and questions III.}\\
{\bf (a)} Characterize all $n$ such that $S_n$ (resp. $A_n$, $D_{2n}$, etc.) is index stable. \\
All the index stable cases are $S_1$, $S_2$, $S_3$, $A_2$, $A_3$, $D_2$, $D_4$, $D_6$, $D_8$.\\
{\bf (b)} Characterize or classify all index stable subgroups of $(\mathbb{R},+)$ and $(\mathbb{C},+)$.
Especially, is  the additive group of rational (resp. real, complex) numbers index stable?\\
This is still open.\\
{\bf (c)} Give some subsets of some (finite, infinite) groups such that all its sub-indices are different (i.e.,
it has no any type of index stability). \\
There is no any counterexample in groups up to order 27, see \href{https://github.com/momoeysfn/Subindices/blob/main/examples/QsIIIc.g}{link}.\\
{\bf (d)} Give finite and infinite examples of a group that is right (resp. left) but not left (resp. right) index stable:\\
There is no any example, it has been proved that right and left index stabilities are equivalent (see \cite{MH3}, the eighth page).\\
Also, give finite and infinite examples of groups that are both left and right index
stable but not index stable:\\
For finite groups they are equivalent, by Corollary \ref{rlfinite}. But, it is still open for the infinite case.
\\
{\bf (h)} If $Dif_\ell^2(A)=G$, $Dif_\ell(A)\neq G$ (the second condition is lost in the original version) and $A$ is right index stable, then $|G:A|_r=2$,
and vice versa (analogously for the left and two-sided cases).\\
Counterexample: $G = C_3 \times C_3,\ A=\{00,01,10\}$ \href{https://github.com/momoeysfn/Subindices/blob/main/examples/QsIIIh.g}{link}, and for the converse $G = C_2 , \ A=\{0\}$. \\
{\bf (i)} If $A\subseteq H\leq G$ and $A$ is index stable in $G$, then it is so
in $H$ (and vice versa).\\
If $|G:H|$ is finite (e.g., if $G$ is finite), then index stability of $A$ in $G$ implies $A$ is index stable in $H$,
for the converse, it is still open (for both finite and infinite case). Of course,  there is no any counterexample in groups up to order 18, see \href{https://github.com/momoeysfn/Subindices/blob/main/examples/QsIIIi.g}{link}. \\
{\bf Problem IV.} Prove or disprove: \\
(a) If $A_1\subseteq A_2$ then $|G:A_2|^-\leq |G:A_1|^-$
or $|G:A_2|^+\leq |G:A_1|^-$;\\
Counterexample: For the first  $G = C_{12}, \ A_2=\{0,1,5,6\}, \ A_1=\{0,1,6\}$, and the
second  $G = C_6, \ A_1=A_2=\{0,1\}$ \href{https://github.com/momoeysfn/Subindices/blob/main/examples/QsIVa.g}{(link)}.
\\
(b) If $A$ is infinite then $|G:Dif_\ell(A)|^+\leq |G:A|^-$.\\
It is still open.
\\
{\bf Question V.} Is it true that $\mbox{SubF}_r(A)=\mbox{SubF}_\ell(A)$ if
$A$ is symmetric and vice versa? \\
Counterexample: For the first part, $G=S_3$, $A=\{ (), (2,3) \} $, and for the converse $G=C_3$, $A=\{0,1\} $ \href{https://github.com/momoeysfn/Subindices/blob/main/examples/QsV.g}{(link)} (of course, we have  $|G:A|^\pm=|G:A^{-1}|^\pm=|G:A|_\pm$ if $A$ is symmetric).
\subsection{Open problems and questions from \cite{MH3}}
For this part, we do the same as what we did in the previous subsection. \\
{\bf Problem I.} Prove or disprove:\\
(1) If $|A|> \sqrt{|G|}$, then $|G:A|=1$ (the converse is valid).\\
Counterexample: $G = D_{10}, A=\{1,a,b,a^4.b\}, |G:A|=2$ \href{https://github.com/momoeysfn/Subindices/blob/main/examples/QsI1.g}{(link)}.
\\
(2) If $A$ is right and left index stable with sub-indices 1 or 2, then $|A|>\frac{|G|}{3}$.\\
Counterexample: $G = D_{10}, \ A=\{1,a,a^2\}, \ |G:A|=2$ \href{https://github.com/momoeysfn/Subindices/blob/main/examples/QsI2.g}{(link)}.
\\
(3) If $G$ is abelian and $A$ is index stable with index 1 or 2, then $|A|>\frac{|G|}{3}$.\\
Counterexample: $G = C_{10}, \ A=\{0,1,3\}, \ |G:A|=2$ \href{https://github.com/momoeysfn/Subindices/blob/main/examples/QsI3.g}{(link)}.
\\
(4) There is a finite group $G$ with a subset $A$ such that $|A|=\lceil \sqrt{|G|}\rceil$ and $|G:A|=1$.\\
True; consider $G =	C_{10}$, $\ A=\{0,1,2,3\}$. \href{https://github.com/momoeysfn/Subindices/blob/main/examples/QsI4.g}{(link)}\\
Hence, we introduce another question here.\\
{\bf New Question.} Is there a finite group $G$ with a subset $A$ such that $|A|=\lfloor \sqrt{|G|}\rfloor$  and $|G:A|=1$?
\\
The answer is negative for all groups of orders  up to 23 \href{https://github.com/momoeysfn/Subindices/blob/main/examples/QsI4.g}{(link)}.
\\
{\bf Problem II.} Determine or classify all subsets $A$ of a finite group $G$ such that one (some) of
the following equalities holds:\\
$|G:A|^+=\lfloor \frac{|G|}{|A|}\rfloor$, $|G:A|^-=\lceil\frac{|G|}{|\Dif_\ell(A)|}\rceil$,
$|G:A|_r=\lfloor \frac{|G|}{|A|}\rfloor$, $|G:A|_r=\lceil\frac{|G|}{|\Dif_\ell(A)|}\rceil$,
$|G:A|_r=\lfloor \frac{|G|}{|A|}\rfloor=\lceil\frac{|G|}{|\Dif_\ell(A)|}\rceil$, etc.\\
This is a research project.\\
{\bf Problem III.} Determine all finite (right) index stable groups.\\
This is completely solved by Theorem \ref{finitegroups}. \\
{\bf Question IV.} Do we have $|G:\{1,a\}|^-=|G:\{1,a\}|_-$, for every $a\in G$? More generally, if
$\Dif_\ell(A)=\Dif_r(A)$ then is it true that $|G:A|^{\pm}=|G:A|_{\pm}$?\\
There is no counterexample up to order 23, for the first part. Also, no counterexample is found for
the second part up to order 18 \href{https://github.com/momoeysfn/Subindices/blob/main/examples/QsIV2.g}{(link)}. \\
{\bf Conjecture V.} Every right sub-factor of $G$ related to $A$ can be gotten from the above algorithm (i.e., Theorem 4.1 \cite{MH3}).\\
This conjecture is true by Theorem \ref{rbsfproof}.\\
{\bf Problem VI.} Find an algorithm (in finite groups) for obtaining a right sub-factor of $G$ related to $A$
with the most (resp. least) size.\\
This is still open.\\
{\bf Question VII.} Is it true that $|\mathbb{Z}_n:\{0,1\}|^-=\lceil \frac{n}{3}\rceil$, for every $n\geq 2$?\\
Yes, due to the next theorem.
\begin{theorem}
\label{41}
The subset $\{0,1\}$ takes its $($relative$)$ maximum upper and minimum lower index in $\mathbb{Z}_n$,
for all $n\geq 2$ $($also see the next remark$)$, i.e.,
$$|\mathbb{Z}_n:\{0,1\}|^-=\lceil\frac{n}{3}\rceil \; , \; |\mathbb{Z}_n:\{0,1\}|^+=\lfloor\frac{n}{2}\rfloor $$
\end{theorem}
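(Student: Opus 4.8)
The plan is to combine the general bounds of $(2.6)$ with the sub-factor criterion of $(2.1)$ in order to reduce the statement to the construction of two explicit extremal sub-factors.

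First I would record that, since $\mathbb{Z}_n$ is abelian and $A=\{0,1\}$, one has $\Dif_\ell(A)=\Dif_r(A)=A-A=\{-1,0,1\}$, so $|\Dif_\ell(A)|=3$ for $n\geq 3$ (and the case $n=2$ is trivial, as then $A=\mathbb{Z}_2$ and $\lceil 2/3\rceil=\lfloor 2/2\rfloor=1$). Substituting $|A|=2$ and $|\Dif_\ell(A)|=3$ into the chain of inequalities $(2.6)$ yields at once
$$\Bigl\lceil\tfrac{n}{3}\Bigr\rceil\ \leq\ |\mathbb{Z}_n:\{0,1\}|^-\ \leq\ |\mathbb{Z}_n:\{0,1\}|^+\ \leq\ \Bigl\lfloor\tfrac{n}{2}\Bigr\rfloor .$$
Hence it suffices to produce one right sub-factor of $\mathbb{Z}_n$ related to $\{0,1\}$ of size $\lceil n/3\rceil$ (which pins down $|\mathbb{Z}_n:\{0,1\}|^-$) and one of size $\lfloor n/2\rfloor$ (which pins down $|\mathbb{Z}_n:\{0,1\}|^+$). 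By the criterion $(2.1)$, a subset $B\subseteq\mathbb{Z}_n$ is such a sub-factor exactly when $(B-B)\cap\{-1,1\}=\emptyset$ and $B+\{-1,0,1\}=\mathbb{Z}_n$; that is, $B$ must be a subset of the cycle $\mathbb{Z}_n$ with no two elements consecutive (independence) every element of which equals, or is adjacent to, an element of $B$ (domination).

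For the upper index I would take $B_{+}:=\{0,2,4,\dots,2(\lfloor n/2\rfloor-1)\}$, the first $\lfloor n/2\rfloor$ even residues. All its pairwise differences are even and hence never $\pm 1$ modulo $n$ (a short congruence check is needed only when $n$ is odd), while the length-$3$ blocks $\{2k-1,2k,2k+1\}$ overlap consecutively and the block at $0$ supplies $n-1$, so $B_{+}+\{-1,0,1\}=\mathbb{Z}_n$. For the lower index I would use a step-$3$ arithmetic progression corrected at the wrap-around: $B_{-}=\{0,3,\dots,3m-3\}$ if $n=3m$; $B_{-}=\{0,3,\dots,3m-3\}\cup\{3m-1\}$ if $n=3m+1$; and $B_{-}=\{0,3,\dots,3m\}$ if $n=3m+2$. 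In each case $|B_{-}|=\lceil n/3\rceil$ by construction, the step-$3$ pattern keeps all differences of progression elements off $\pm 1$, and the length-$3$ blocks centred at the chosen points tile $\mathbb{Z}_n$.

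The only genuine work is the seam analysis for $B_{-}$ in the three residue classes of $n\bmod 3$: one must check simultaneously that the corrected element creates no difference $\pm 1$ with $0$ or with the last progression term, and that the wrap-around block still covers the few residues near $0$; the verification for $B_{+}$ and the small cases $n=2,3$ are immediate. I do not expect any conceptual obstacle beyond this finite bookkeeping. Alternatively, one may phrase the whole argument graph-theoretically: right sub-factors related to $\{0,1\}$ are precisely the maximal independent sets of the cycle $C_n$, so $|\mathbb{Z}_n:\{0,1\}|^-$ and $|\mathbb{Z}_n:\{0,1\}|^+$ equal the minimum and maximum sizes of such sets, namely the independent domination number $\lceil n/3\rceil$ and the independence number $\lfloor n/2\rfloor$ of $C_n$.
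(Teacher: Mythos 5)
Your proposal is correct and follows essentially the same route as the paper: the same chain of bounds coming from the inequality $(2.6)$ with $|A|=2$, $|\Dif_\ell(A)|=3$, and the very same extremal sets (the even residues $\{0,2,\dots,2(\lfloor n/2\rfloor-1)\}$ for the upper index and the step-$3$ progression, corrected when $n\equiv 1\pmod 3$, for the lower index). The only difference is presentational: you explicitly verify the covering condition $B+\{-1,0,1\}=\mathbb{Z}_n$ (equivalently maximality, via $(2.1)$), which is indeed what makes the size-$\lceil n/3\rceil$ set a genuine sub-factor and which the paper's proof leaves implicit by checking directness of the sums only.
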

\begin{proof}
Note that $|\{0,1\}|=2$, $|\{0,1\}-\{0,1\}|=|\{0,1,n-1\}|=3$ and
$$
\lceil \frac{n}{3}\rceil\leq |\mathbb{Z}_n:\{0,1\}|^-\leq |\mathbb{Z}_n:\{0,1\}|^+\leq \lfloor \frac{n}{2}\rfloor.
$$
Putting $B:=\left\{0,2,\ldots,2(\lfloor n/2\rfloor - 1)\right\}$ and
$$B':=\left\{\begin{array}{cc}\left\{0,3,\ldots,3(\lceil\frac{n}{3}\rceil- 1)\right\} \; ; &\; n\not\equiv 1 \pmod 3 \\
\left\{0,3,\ldots,3(\lceil\frac{n}{3}\rceil- 2),3\lceil\frac{n}{3}\rceil-4\right\} \; ;& \; n\equiv 1 \pmod 3
\end{array}
\right.$$
 we have $|B|=\lfloor \frac{n}{2}\rfloor$
and $|B'|=\lceil \frac{n}{3}\rceil$. Due to the above inequality, it is enough to show that the summations
$B+\{0,1\}$ and $B'+\{0,1\}$ are direct, or equivalently
$$(B-B)\cap \{0,1,n-1\}=\{0\}=(B'-B')\cap \{0,1,n-1\}.$$
Let $x,y$ are both elements of $B$ or $B'$ and $y\neq 0$.  If $x+(n-y)=n-1$, then we get a contradiction,
since $2|x-y$ or $3|x-y$, or one of the relations  $3|x$, $y=n-2$ or $3|y$, $x=n-2$ be occurred.\\
Hence, suppose that $x+(n-y)=1$ and consider the following cases:\\
(i) $x,y\in B$: we conclude that $n$ is odd,
$0\leq x\leq n-3$, $2\leq y\leq n-3$, and so $5-n\leq y-x\leq n-3$ that is impossible. \\
(ii) $x,y\in B'$ and $n\not\equiv 1 \pmod 3$: the equality $y-x=n-1$ gives a contradiction, since $3|x-y$.   \\
(iii) $x,y\in B'$ and $n\equiv 1 \pmod 3$: then both $x$ and $y$ must be divided by 3 and so $n-1\leq y-x\leq n-4$
that is a contradiction.\\
Finally, note that if $y=0$, then one can see $x-y\neq 1, n-1$. Therefore, the proof is complete.
\end{proof}
Regarding problem II(a) of subsection 4.1, the above theorem, considering Table 1,2 we have an important conjecture for finite cyclic groups as follows.\\
{\bf New conjecture.}
If $n>11$ then $\mathbb{Z}_n$ is not $k$-index stable if and only if $2\leq k \leq \lfloor \frac{n}{3} \rfloor$.

\begin{rem}
Since the set of all solutions of the equation  $\lceil \frac{n}{3}\rceil=\lfloor \frac{n}{2}\rfloor$ is $\{2,3,4,5,7\}$,
we deduce that $\mathbb{Z}_n$ ($n\geq 2$) is 2-index stable if and only if $n\in \{2,3,4,5,7\}$ which agrees with
Lemma 3.17 of \cite{MH2}. But, if $n\notin \{1, 2,3,4,5,7\}$ then $\{0,1\}$ takes its relatively least (resp. largest) possible upper
(resp. lower) index which means it is relatively strong index unstable. Note that a subset $A$ of a finite group $G$ is
relatively (resp. absolutely)  strong right index unstable
if $\frac{|G|}{|\Dif_\ell(A)|}\rceil=|G:A|^-<|G:A|^+= \lfloor \frac{|G|}{|A|}\rfloor$ (resp. $2=|G:A|^-<|G:A|^+=\lfloor \frac{|G|}{|2|}\rfloor$).
\end{rem}
{\bf GAP Project VIII.} Let $G$ be a finite group and $A\subseteq G$.
Give some GAP codes for evaluating or checking the following items one time by the definitions and another time
by using the related properties or algorithms:\\
(a) $\SubF_\ell(A)$ and $\SubF_r(A)$ for a given subset $A$;\\
(b) All sub-indices of $A$;\\
(c) The six types of index stability of $G$, and the set of all integers $\alpha$
such that $G$ is $\alpha$-index stable.\\
(d) Also, some codes for checking the mentioned problems and questions.\\
These should be helpful for answering questions, solving problems or getting some counterexamples.\\
\underline{This project} has been carried out to a considerable extent in this paper.

\subsection*{Acknowledgement} The authors are grateful to Professor George Bergman for his valuable comments and providing a theoretical method to prove the non-index stability of $G = (\mathbb{Z}_2)^5$.

\end{document}